\newcommand{\M}{\mathcal{M}}
\newcommand{\R}{\mathbb{R}}
\newcommand{\C}{\mathbb{C}}
\newcommand{\eps}{\varepsilon}
\newcommand{\A}{{\bf A}}
\newcommand{\bfM}{{\bf M}}
\newcommand{\bfP}{{\bf P}}
\renewcommand{\S}{{\bf S}}
\renewcommand{\Re}{\mathrm{Re}\,}
\newcommand{\wh}{\widehat}
\newcommand{\wt}{\widetilde}
\newcommand{\F}{{\bf F}}
\newcommand{\U}{{\bf U}}
\newcommand{\V}{{\bf V}}
\newcommand{\Q}{{\bf Q}}
\newcommand{\I}{{\bf I}}
\newcommand{\K}{{\bf K}}
\DeclareMathOperator{\ten}{\it Ten}
\DeclareMathOperator{\mat}{\bf{Mat}}
\DeclareMathOperator{\bigtimes}{{\hbox{\large\sf X}}}
\def\bcl{\color{black}}
\def\ecl{\color{black}}
\title{Rank-adaptive time integration of \\tree tensor networks}
\author{Gianluca Ceruti\footnotemark[7] \and Christian Lubich\footnotemark[1] \and Dominik Sulz\footnotemark[1]}
\date{}
\begin{document}

\maketitle

\renewcommand{\thefootnote}{\fnsymbol{footnote}}

\footnotetext[1]{Mathematisches Institut,
       Universit\"at T\"ubingen,
       Auf der Morgenstelle 10,
       D--72076 T\"ubingen,
       Germany. Email: {\tt \{lubich,sulz\}@na.uni-tuebingen.de}}
\footnotetext[7]{Institute of Mathematics, EPF Lausanne, 1015 Lausanne, Switzerland. Email: {\tt gianluca.ceruti@epfl.ch}}  
    
\begin{abstract}
A rank-adaptive integrator for the approximate solution of high-order tensor differential equations by tree tensor networks is proposed and analyzed. In a recursion from the leaves to the root, the integrator updates bases and then evolves connection tensors by a Galerkin method in the augmented subspace spanned by the new and old bases. This is followed by rank truncation within a specified error tolerance. The memory requirements are linear in the order of the tensor and linear in the maximal mode dimension.
The integrator is robust to small singular values of matricizations of the connection tensors. Up to the rank truncation error, which is controlled by the given error tolerance, the integrator preserves norm and energy for Schr\"odinger equations, and it dissipates the energy in gradient systems. Numerical experiments with a basic quantum spin system illustrate the behavior of the proposed algorithm.
\end{abstract}

{
	\begin{keywords}
		Tree tensor network, tensor differential equation, dynamical low-rank approximation, rank adaptivity.
	\end{keywords}
	\begin{AMS}
		15A69, 65L05, 65L20, 65L70, 81Q05.
	\end{AMS}
	\pagestyle{myheadings}
	\thispagestyle{plain}
	\markboth{G.~CERUTI, CH.~LUBICH AND D.~SULZ}{TIME INTEGRATION OF TREE TENSOR NETWORKS}
}

\section{Introduction}
A tree tensor network (TTN) is a tensor in a  data-sparse hierarchical format.
Our interest here is to use TTNs for the approximate solution of evolutionary tensor differential equations of high order $d$,
\begin{equation} \label{full-eq}
	\dot{A}(t) = F(t, A(t)), \quad A(t_0) = A^0 \in \C^{n_1\times\cdots \times n_d}.
\end{equation}
In particular, though by no means exclusively, 
such problems arise in quantum dynamics, where \eqref{full-eq} can represent a quantum spin system or a spatially discretized multi- to many-body Schr\"odinger equation. 
The multilayer MCTDH method in the chemical physics literature \cite{WaT03} and matrix product states and more general TTNs in the quantum physics literature \cite{ShDV06} approximate the solution of
\eqref{full-eq} by TTNs with time-dependent bases and connection tensors. Their evolution is determined by the Dirac--Frenkel time-dependent variational principle \cite{KrS81,Lu08}, which projects the right-hand side of \eqref{full-eq} orthogonally onto the tangent space of the approximation manifold $\M$ (here, TTNs of a fixed tree rank) at the current approximation $Y(t)$,
\begin{equation} \label{proj-eq}
	\dot{Y}(t) = P_{Y(t)} F(t, Y(t)), \quad Y(t_0) = Y^0 \in \M.
\end{equation}
The numerical integration of \eqref{proj-eq} encounters substantial difficulties: first,  the abstract differential equation \eqref{proj-eq} cannot be integrated as is but instead one needs differential equations for the basis matrices and connection tensors in the TTN representation. Such differential equations for the factors were derived  from \eqref{proj-eq} in \cite{WaT03}, but the resulting differential equations become near-singular in the typical presence of small singular values of matricizations of the connection tensors. This necessitates the use of tiny stepsizes for standard integrators applied to the differential equations of the factors; cf.~\cite{KiLW16}. This problem does not appear with the projector-splitting integrator, which splits the tangent space projection $P_Y$ into an alternating sum of subprojections. This splitting leads to an efficiently implementable integrator that has been shown to be robust to small singular values; see
\cite{LuO14,KiLW16} for low-rank matrix differential equations,
\cite{Lu15,LuVW18} for the approximation of tensor differential equations by Tucker tensors of given multilinear rank,
\cite{LuOV15,HaLOVV16,KiLW16} for tensor trains / matrix product states of given ranks, and recently \cite{CeLW21,LiKR21} for general tree tensor networks of given tree rank.

In many situations, it is desirable not to fix the tree rank {\it a priori} but to choose it adaptively in every time step, because the optimal ranks required for a
given approximation accuracy may vary strongly with
time and within the tree. Moreover, rank adaptivity can indicate up to which time
the solution can be approximated with a prescribed maximal rank. 
Various elaborate rank-adaptive versions of the projector-splitting integrator for tensor trains / matrix product states have recently been developed in
\cite{DeRV21,DuC21,YaW20}, which differ in the way how subspaces are augmented.

Here we follow a different approach to a rank-adaptive TTN integrator, which is not based on the projector-splitting integrator. We extend the rank-adaptive integrator for dynamical low-rank approximation of matrix differential equations recently given in \cite{CeKL22}, which updates the left and right bases and then does a Galerkin approximation in the {\it augmented subspace spanned by the old and new bases}, followed by rank truncation with a prescribed error tolerance. This approach is conceptually and algorithmically simpler than rank adaptivity in the framework of the projector-splitting integrator, and it has been shown to retain the robustness to small singular values and to have favorable structure-preserving properties. Furthermore, it has a more parallel algorithmic structure and has no substeps with propagation backward in time, as opposed to the projector-splitting integrator. It is related to the Basis-Update \& Galerkin fixed-rank integrator of \cite{CL21}, which however uses only the new bases in the Galerkin method. The novel rank-adaptive integrator
was already extended to Tucker tensors in the same paper \cite{CeKL22} where the matrix version was presented. Here we extend it to the  more intricate situation of general TTNs, study its theoretical properties and present results of numerical experiments for a problem from quantum physics.

There are two preparatory sections:
In Section~\ref{sec:ttn} we recall TTNs in the formalism of \cite{CeLW21}, which was found useful for the formulation, implementation and analysis of numerical methods for TTNs.
In Section~\ref{sec:Tucker} we recapitulate the rank-adaptive Tucker tensor integrator of \cite{CeKL22} and give a simple extension to $r$-tuples of Tucker tensors with common basis matrices.

In Section~\ref{sec:alg} we formulate the algorithm of the rank-adaptive TTN integrator. The algorithm updates and augments the bases and augments and evolves the connection tensors by a Galerkin method in a recursion from the leaves to the root, and finally truncates the ranks adaptively in a recursion from the root to the leaves.

Section~\ref{sec:robust} briefly discusses the exactness property and robust error bound (independent of small singular values) of the new TTN integrator. We do not include a proof of these fundamental properties here, because the results are essentially the same as for the TTN integrator based on the projector-splitting integrator \cite{CeLW21} and because the proof combines the proofs of the analogous results in \cite{CeKL22,CL21} and \cite{CeLW21} in a direct way.

In Section~\ref{sec:cons} we show that, up to a multiple of the truncation tolerance,  the rank-adaptive integrator conserves the norm and energy for Schr\"odinger equations and dissipates the energy for gradient systems. 

Section~\ref{sec:num} presents numerical experiments with a basic quantum spin system, the Ising model in a transverse field \cite{S73}. An interesting observation in our experiments is that for this model, the ranks and numbers of free parameters required for a TTN on a binary hierarchical tree of minimal height turn out to be significantly smaller than those required for a matrix product state (MPS), which is a TTN on a binary tree of maximal height. This may come unexpected as the Ising model has only nearest-neighbor interactions, which are well represented by an MPS. However, MPSs appear to struggle with capturing long-range effects for this model. In any case, the rank-adaptive TTN integrator proves to be a useful tool to numerically study the influence of the tree structure in the TTN approximation of a many-body quantum system.

\bcl
In the Appendix we give a derivation and error analysis of the recursive TTN rank-trunca\-tion algorithm that we use in the rank-adaptive integrator. This is based on higher-order singular value decomposition (HOSVD) \cite{LMV00} and applies to general trees. For TTNs on binary trees, a different formulation and error analysis of an HOSVD-based rank truncation algorithm was previously given in 
\cite[Section 11.4.2]{H19}.
\ecl

\section{Recap: Tree Tensor Networks} \label{sec:ttn}
In this section we recall the tree tensor network formalism of \cite{CeLW21}, which gives a recursive construction of basis matrices and connection tensors in a concise mathematical notation.
As a preparation for the TTN integrator, we also recapitulate how operators on tree tensor networks near a given starting value are reduced to operators on tensor networks on subtrees via suitable orthogonal restrictions and prolongations.

We write tensors in italic capitals and matrices in boldface capitals throughout the paper.

\subsection{Tucker tensors}
The $i$th matricization (see, e.g., \cite{KoB09}) of a tensor $A \in \C^{n_1 \times \dots  \times n_d}$ is denoted as $\mat_i (A) \in \C^{n_i \times  n_{\lnot i}}$, where
$ n_{\lnot i} = \prod_{j \neq i}^d n_j$.
 Its $k$th row  aligns all entries of $A$ that have $k$ as the $i$th subscript, usually with reverse lexicographic ordering. The inverse operation to $\mat_i$ is called tensorization and is denoted by $\ten_i$. For a tensor $A$ and a matrix $\A_{(i)}$ of compatible dimensions, we have $A=\ten_i(\A_{(i)})$ if and only if
 $\A_{(i)}=\mat_i(A)$. 
 
 The multilinear rank $(r_1,\dots ,r_d)$ of a tensor $A$ is defined as the $d$-tuple of ranks $r_i$ of the $i$th matricization $\mat_i(A)$. 
It is known from \cite{LMV00} that $A$ has multilinear rank $(r_1,\dots ,r_d)$ if and only if it has a {\it Tucker decomposition}
\begin{align}
	A = C \bigtimes_{i=1}^d \U_i, \qquad i.e.,\quad
	a_{k_1,\dots,k_d} = \sum_{l_1=1}^{r_1} \dots  \sum_{l_d=1}^{r_d} c_{l_1,\ldots,l_d} u_{k_1,l_1}^{(1)}\dots u_{k_d,l_d}^{(d)}	,
	\label{eq:tucker_dec}
\end{align}
where each {\it basis matrix} $\U_i=(u_{k,l}^{(i)})\in\C^{n_i\times r_i}$ has orthonormal columns and the {\it core tensor} $C\in\C^{r_1\times\dots\times r_d}$ has full multilinear rank $(r_1,\dots,r_d)$.

We recall the useful formula for the matricization of Tucker tensors, see \cite{KoB09},
\begin{align}
	\mat_i \left( C \bigtimes_{j=1}^d \U_j \right) = \U_i \mat_i(C) \bigotimes_{j\neq i} \U_j^\top. \label{eq:mat_tucker}
\end{align}

\subsection{Tree tensor networks} \label{subsec:ttn}
Tree tensor networks are constructed by hierarchical Tucker decompositions. For the precise formulation, we use the notation of \cite{CeLW21} and work with the following class of trees that encode the hierarchical structure.

\begin{definition}[Ordered trees with unequal leaves]
	
	Let $\mathcal{L}$ be a given finite set, the elements of which are referred to as leaves.
	We  define the set $\mathcal{T}$ of trees  $\tau$ with the corresponding set of leaves $L(\tau)\subseteq \mathcal{L}$ recursively as follows:
	
	\begin{enumerate}[(i)]
		\item
		 {\em Leaves are trees:} 
		$ \mathcal{L} \subset \mathcal{T}$,\ \text{ and }\  $L(l) := \{l\}$ for each $l \in \mathcal{L}$.
		\item  {\em Ordered $m$-tuples of trees with different leaves are again trees:} 
		If, for some $m\ge 2$,
		$$
		\tau_1, \dots, \tau_m \in \mathcal{T} 
		\quad \text{ with }\quad
		L(\tau_i ) \cap L(\tau_j ) = \emptyset \quad \forall i \neq j,
		$$
		 then their ordered $m$-tuple is in $\mathcal{T}$:		
		$$ \tau := (\tau_1, \dots, \tau_m) \in \mathcal{T} 
		, \quad \text{ and }\quad 
		L(\tau) := \dot{\bigcup}_{i=1}^m L(\tau_i) \ . 
		$$
	\end{enumerate}		
\end{definition}
The graphical interpretation is that leaves are trees and every other tree is obtained by connecting a root to several trees; see Figure~2.1.

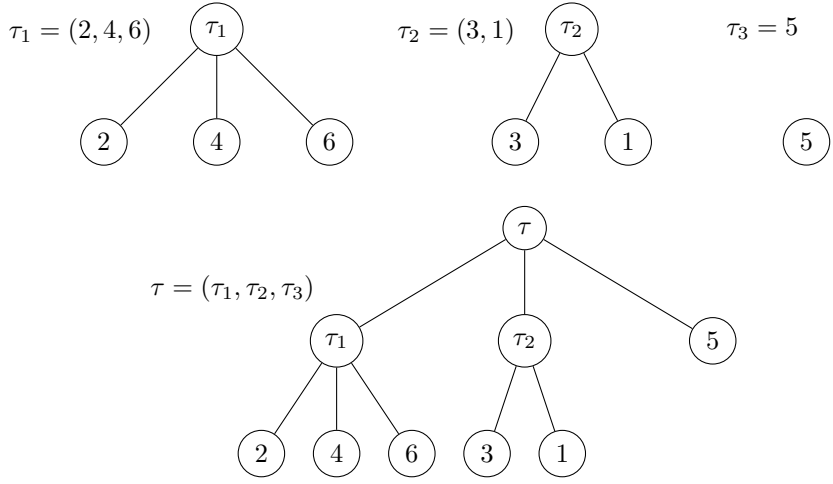
\begin{figure} 
	\label{fig:tree} 
	\begin{center}
	\begin{tikzpicture}
		\node[circle,draw]  { $\tau_1 $}
		child { node[circle,draw] {2} }
		child { node[circle,draw] {4} }
		child { node[circle,draw] {6} };
		
		\node[align=center,font=\bfseries, yshift=2em] (title) 
		at (current bounding box.west)
		{$ \tau_1 = (2,4,6) $};
	\end{tikzpicture}	
	\quad
	\begin{tikzpicture}
		\node[circle,draw] { $\tau_2$}
		child { node[circle,draw] {3} }
		child { node[circle,draw] {1} };
		
		\node[align=center,font=\bfseries, yshift=2em] (title) 
		at (current bounding box.west)
		{$ \tau_2 = (3,1) \qquad $ };
	\end{tikzpicture}
	\quad
	\begin{tikzpicture}
		\node[circle,draw] { $5$};
		 
		\node[align=center,font=\bfseries, yshift=4.3em] (title) 
		at (current bounding box.west)
		{$ \quad\tau_3 = 5 \qquad $ };
	\end{tikzpicture}
	
	\vspace{0.5cm}
	
	\begin{tikzpicture}[every node/.style={},level 1/.style={sibling distance=25mm},level 2/.style={sibling distance=10mm}]
		\node[circle,draw] { $\tau$ }
		child { node[circle,draw] { $\tau_1$ }
			child { node[circle,draw] {2} }
			child { node[circle,draw] {4} }
			child { node[circle,draw] {6} } }
		child { node[circle,draw] { $\tau_2$ }
			child { node[circle,draw] {3} }
			child { node[circle,draw] {1} } }
		child{ node[circle,draw] {5}};
		
		\node[align=center,font=\bfseries, yshift=2em] (title) 
		at (current bounding box.west)
		{$ \tau = (\tau_1, \tau_2, \tau_3) $ };
	\end{tikzpicture}
	
	  \caption{Graphical representation of a tree and three subtrees with the set of leaves $\mathcal{L}=\{1,2,3,4,5,6\}$.}
	\end{center}	
	
\end{figure} 
		The trees $\tau_1,\dots,\tau_m$ are called direct subtrees of the tree $\tau= (\tau_1, \dots, \tau_m)$, which together with direct subtrees of direct subtrees of $\tau$ etc.~are called the {\it subtrees} of~$\tau$. The subtrees are in a bijective correspondence with the vertices of the tree, by assigning to each subtree its root.
		
A partial ordering of trees is obtained
by setting for $\sigma, \tau \in \mathcal{T}$ 
\begin{align*}
	 &\sigma \leq \tau \text{ if and only if $\sigma$ is a subtree of $\tau$}, \\
	 &\sigma < \tau \text{ if and only if } \sigma \leq \tau \text{ and } \sigma \neq \tau .
\end{align*}
We fix $\bar{\tau} \in \mathcal{T}$ as a maximal tree, with $L(\bar{\tau}) = \mathcal{L}$. 
With each leaf $ l \in \mathcal{L}$ we associate a {\it basis matrix} $\U_l \in \C^{n_l \times r_l}$ of rank $r_l \leq n_l$. 
With each tree $\tau = (\tau_1 ,\dots  , \tau_m ) \leq \bar{\tau}$ we associate a {\it connection tensor} $ C_{\tau} \in \C^{r_\tau \times r_{\tau_1},\dots ,r_{\tau_m}}$ of full multilinear rank $(r_{\tau},r_{\tau_1},\dots,r_{\tau_m})$. 
A necessary condition for the connection tensor $C_\tau$ to have full multilinear rank is that (with $r_{\tau_0}:=r_\tau$)
\begin{align}
	r_{\tau_i} \leq \prod_{j \neq i} r_{\tau_j}, \qquad i=0,\dots ,m.
\end{align}
This compatibility of ranks will always be assumed in the following. \bcl We further assume $r_{\overline\tau}=1$. \ecl

From the basis matrices $\U_l$ ($l\in \mathcal{L}$) and 
connection tensors $C_\tau$ ($\tau\in\mathcal{T}\setminus\mathcal{L}$ with $\tau \leq \bar{\tau}$), a
tree tensor network is defined recursively in the following way~\cite{CeLW21}.
\begin{definition}[Tree tensor network]
	For a given tree $\bar \tau \in \mathcal{T}$ and basis matrices $\U_l$ and connection tensors $C_\tau$ as described above, we recursively define a tensor $X_{\bar \tau}$ with a tree tensor network representation (or briefly a tree tensor network) as follows: 
	\begin{enumerate}[(i)]
		\item
		For each leaf  $\,\tau = l \in \mathcal{L}$, we set 
		$$X_l := \U_l^\top \in \C^{r_l \times n_l} \ .  $$
		\item
		For each subtree $\tau = ( \tau_1, \dots, \tau_m)$  (for some $m\ge 2$) of $\bar\tau $, 
		we set \\
		$n_\tau = \prod_{i=1}^m  n_{\tau_i}$ and $\I_\tau$ the identity matrix of dimension $r_\tau$, and
		\begin{align*}
		& X_\tau := C_\tau \times_0 \I_{\tau} \bigtimes_{i=1}^m \U_{\tau_i} 
		\in \mathbb{C}^{r_\tau \times n_{\tau_1} \times \dots \times n_{\tau_m}},
		\\
		& \U_{\tau} := \mat_0( X_\tau )^\top \in \mathbb{C}^{n_\tau \times r_\tau} \ .
		\end{align*}
		The subscript $0$ in $\times_0$ and $\mat_0( X_\tau )$ refers to the mode $0$ of dimension $r_{\tau}$ in $\C^{r_\tau \times r_{\tau_1}\times\dots\times r_{\tau_m}} $. 
	\end{enumerate}
	The tree tensor network $X_{\bar{\tau}}$ (more precisely, its representation in terms of the matrices $\U_\tau$) is called {\em orthonormal} if for each subtree $\tau < \bar{\tau}$, the matrix $\U_\tau$ has orthonormal columns. 
\end{definition}

With $n=\max_\ell n_\ell$, $r=\max_\tau r_\tau$, and $m+1$ the maximal order of the connection tensors, the basis matrices and connection tensors then have less than
$dnr + d r^{m+1} \ll n^d$ entries. 

Tree tensor networks were first used in physics \cite{WaT03,ShDV06}. In the mathematical
literature, tree tensor networks with binary trees have been studied as {\it hierarchical
tensors} \cite{H19} and with general trees as {\it tensors in tree-based tensor format} \cite{FaHN15,FaHN18}; see also the survey \cite{BaSU16}. 

We define the height of a tree $\tau$ by $0$ if $\tau=l$ is a leaf, and for $\tau = (\tau_1,\dots ,\tau_m)$, we set $h(\tau) = 1 + \text{max}\{ h(\tau_1),\dots ,h(\tau_m) \}$.
Tensor trains are tree tensor networks of maximal height, Tucker tensors have height~$1$.  

The set $\mathcal{M}_{\bar\tau}=\mathcal{M}(\bar\tau, (n_l)_{l\in \mathcal{L}}, (r_\tau)_{\tau\le\bar\tau})$ of  tree tensor networks on a tree $\bar \tau\in\mathcal{T}$ with given dimensions $(n_l)_{l\in \mathcal{L}}$ and tree rank $(r_\tau)_{\tau\le\bar\tau}$ is known to be a smooth embedded manifold in the tensor space  $\C ^{\times_{l\in \mathcal{L}} n_l}$;  see \cite{UschV13}, where binary trees are considered, and also \cite{FaHN15}.

In this paper we will work with orthonormal tree tensor networks.
The following key lemma localizes orthonormality at the (small) connection tensors $C_\tau$ instead of the computationally inaccessible (huge) matrices $\U_\tau$.

\begin{lemma}  \cite[Lemma~$2.4$]{CeLW21} \label{lem:orth}
For a tree $\tau = (\tau_1, \dots  , \tau_m ) \in \mathcal{T}$, let the matrices $\U_{\tau_i}$ have orthonormal columns. Then, the matrix $\U_\tau$ has orthonormal columns if and only if the matricization $\mat_0{(C_\tau)}^\top$ has orthonormal columns. 
\end{lemma}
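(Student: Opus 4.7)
The plan is to compute $\U_\tau^\top \U_\tau$ directly from the defining recursion and the matricization formula \eqref{eq:mat_tucker}, and read off the equivalence.

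First I would use the definition $\U_\tau = \mat_0(X_\tau)^\top$ with $X_\tau = C_\tau \times_0 \I_\tau \bigtimes_{i=1}^m \U_{\tau_i}$, and apply the Tucker-matricization formula \eqref{eq:mat_tucker} in mode $0$. Since $\I_\tau$ acts on mode $0$, this yields
\begin{equation*}
   \mat_0(X_\tau) \;=\; \I_\tau\,\mat_0(C_\tau)\,\Bigotimes_{i=1}^m \U_{\tau_i}^\top \;=\; \mat_0(C_\tau)\,\Bigotimes_{i=1}^m \U_{\tau_i}^\top .
\end{equation*}

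Next I would compute the Gram matrix of $\U_\tau$. Writing $\U_\tau^\top \U_\tau = \mat_0(X_\tau)\,\mat_0(X_\tau)^\top$ and using the mixed-product identity $(A\otimes B)(C\otimes D) = AC\otimes BD$ repeatedly, together with the orthonormality assumption $\U_{\tau_i}^\top \U_{\tau_i} = \I_{r_{\tau_i}}$ for each $i$, the Kronecker factor collapses:
\begin{equation*}
   \Bigl(\Bigotimes_{i=1}^m \U_{\tau_i}^\top\Bigr)\Bigl(\Bigotimes_{i=1}^m \U_{\tau_i}\Bigr) \;=\; \Bigotimes_{i=1}^m \U_{\tau_i}^\top \U_{\tau_i} \;=\; \I .
\end{equation*}
Hence
\begin{equation*}
   \U_\tau^\top \U_\tau \;=\; \mat_0(C_\tau)\,\mat_0(C_\tau)^\top .
\end{equation*}

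Finally I would observe that the right-hand side equals $\I_{r_\tau}$ if and only if the columns of $\mat_0(C_\tau)^\top$ are orthonormal (this is just the definition of orthonormal columns applied to that matrix), which is exactly the claimed equivalence. There is no substantial obstacle here; the only thing to be careful about is the bookkeeping of mode indices (mode $0$ carries the identity and is preserved by the matricization, while modes $1,\dots,m$ are contracted with $\U_{\tau_i}$ and produce the Kronecker product that telescopes to the identity).
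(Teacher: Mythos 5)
Your argument is correct and is essentially the same as the one the paper relies on (the lemma is quoted from \cite[Lemma~2.4]{CeLW21}, whose proof likewise applies the matricization formula \eqref{eq:mat_tucker} in mode $0$ and collapses the Kronecker factor via the mixed-product property). The only minor point to fix is that over $\C$ orthonormality of columns means $\U_\tau^*\U_\tau=\I$ rather than $\U_\tau^\top\U_\tau=\I$, so the computation should carry conjugate transposes, giving $\U_\tau^*\U_\tau=\bigl(\mat_0(C_\tau)^\top\bigr)^*\mat_0(C_\tau)^\top$; this changes nothing structurally.
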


By the recursive definition of tree tensor networks and the lemma, we see that each tree tensor network has a representation with basis matrices $\U_\tau$ having orthonormal columns. One can obtain such an orthonormal representation by performing multiple QR-decompositions recursively from the leaves to the root. At a leaf with $l=\tau_i$ and corresponding connection tensor $C_\tau$, one computes the QR decomposition $\U_l = \Q_l\mathbf{R}_l$ and sets $\U_l = \Q_l$ and $C_\tau = C_\tau \times_i \mathbf{R}_l$.  At a connection tensor one computes the QR decomposition $ \mat_0(C_{\tau_i})^\top = \Q_{\tau_i}\mathbf{R}_{\tau_i}$ and sets $C_{\tau_i} = \ten_0(\Q_{\tau_i}^\top)$ and $C_\tau = C_\tau \times_i \mathbf{R}_{\tau_i}$. 

The contraction product of two tree tensor networks $X_\tau,Y_\tau \in \mathbb{C}^{r_\tau \times n_{\tau_1} \times \dots \times n_{\tau_m}}$  is defined by 
\begin{align*}
	\langle X_\tau,Y_\tau \rangle := \bigl( \mat_0 (X_\tau)^\top \bigr)^* \mat_0(Y_\tau)^\top \in \C^{r_\tau \times r_\tau} ,
\end{align*}
where $^*$ is the conjugate transpose.
By formula (\ref{eq:mat_tucker}), the product can be implemented recursively in an efficient way; see \cite{CeLW21} for details. This allows us to compute the product without ever computing the full tensors. 

For a maximal tree $\bar{\tau}$, we have $r_{\bar{\tau}} = 1$. Therefore the product of two tree tensor networks $\langle X_{\bar{\tau}},Y_{\bar{\tau}} \rangle \in \C$, and $\sqrt{ \langle X_{\bar{\tau}},X_{\bar{\tau}} \rangle}$ is the Euclidean norm of the orthonormal tree tensor network $X_{\bar{\tau}}$ (i.e., the Euclidean norm of the vectorized tensor).

\subsection{Reducing tree tensor network operators to subtrees}
\label{subsec:subtrees}

Suppose that on the maximal tree $\bar{\tau}$ we have a given tree tensor network $Y_{\bar\tau}^0$ and a (nonlinear) operator $F_{\bar{\tau}}$ that maps tensors in the tree tensor network representation corresponding to the tree $\bar \tau$ to tensors of the same type. We will need reduced versions $Y_{\tau}^0$ and $F_\tau$ of the given $Y_{\bar\tau}^0$ and $F_{\bar\tau}$ on all subtrees $\tau<\bar\tau$, such that $F_\tau$ maps tensors in the tree tensor network representation corresponding to the subtree $\tau$ to tensors of the same type.
In \cite{CeLW21} it is described how $Y_{\bar\tau}^0$ and $F_{\tau}$ can be recursively constructed by (linear) prolongation and restriction operators. We briefly summarize the main results and refer to \cite{CeLW21} for more details.

For a tree $\tau = (\tau_1,\dots ,\tau_m)$ we introduce the space
$
	\mathcal{V}_{\tau} = \C^{r_\tau \times n_{\tau_1} \times \dots  \times n_{\tau_m}},\ 
$
 and $\mathcal{M}_{\tau}=\mathcal{M}(\tau, (n_l)_{l\in L(\tau)}, (r_\sigma)_{\sigma\le\tau})\subset \mathcal{V}_{\tau}$ denotes the manifold of tree tensor networks of full tree rank $(r_\sigma)_{\sigma\le\tau}$, which is embedded in $\mathcal{V}_{\tau}$.
The given nonlinear operator $F_{\bar{\tau}}: \mathcal{V}_{\bar{\tau}} \rightarrow \mathcal{V}_{\bar{\tau}}$ is considered near a tree tensor network $Y_{\bar\tau}^0\in\mathcal{M}_{\bar\tau}$ (referred to as starting value). Assume by induction that $F_\tau:\mathcal{V}_{\tau} \rightarrow \mathcal{V}_{\tau}$ and a starting value $Y_{\tau}^0\in \mathcal{M}_{\tau}$ are already constructed for some subtree $\tau=(\tau_1,\dots,\tau_m)$. We then construct the nonlinear operators
$F_{\tau_i}: \mathcal{V}_{\tau_i} \rightarrow \mathcal{V}_{\tau_i}$ and the starting values $Y_{\tau_i}^0$ for the direct subtrees $\tau_i$ of $\tau$.

Consider a tree tensor network $Y_\tau^0 = C_\tau^0 \times_0 \I_\tau \bigtimes_{j=1}^m \U_{\tau_j}^0$ and define 
$$
V_{\tau_i}^0 = \mat_i ( \ten_i(\Q_{\tau_i}^{0,\top}) \times_0 \I_{\tau} \bigtimes_{j\neq i} \U_{\tau_j}^0 )^\top, 
$$ 
where $\Q_{\tau_i}^{0}$ is the unitary factor 
in the QR decomposition $ \mat_i(C_{\tau}^0)^\top = \Q_{\tau_i}^0\mathbf{R}_{\tau_i}^0$.
Then, the {\it prolongation} $\pi_{\tau,i}:\mathcal{V}_{\tau_i} \rightarrow \mathcal{V}_{\tau}$ and {\it restriction} 
$\pi_{\tau,i}^{\dagger}:\mathcal{V}_{\tau} \rightarrow \mathcal{V}_{\tau_i} $ (which both depend on the starting value $Y_\tau^0$) are defined as the linear operators
\begin{align*}
	\pi_{\tau,i}(Y_{\tau_i}) &= \ten_i ((V_{\tau_i}^0 \mat_0 (Y_{\tau_i}))^\top) \in \mathcal{V}_\tau \quad \text{for } Y_{\tau_i} \in \mathcal{V}_{\tau_i} \\
	\pi_{\tau,i}^{\dagger} (Z_\tau) &= \ten_0((\mat_i(Z_\tau)V_{\tau_i}^0)^\top) \in \mathcal{V}_{\tau_i} \quad \text{for } Z_{\tau} \in \mathcal{V}_\tau.
\end{align*}
It is shown in \cite{CeLW21} that $\pi_{\tau,i}^{\dagger}$ is both a left inverse and the adjoint of $\pi_{\tau,i}$.

For the given (nonlinear) operator $F_{\bar{\tau}}=F: \mathcal{V}_{\bar{\tau}} \rightarrow \mathcal{V}_{\bar{\tau}}$ and a tree tensor network $Y_{\bar{\tau}}^0 \in \mathcal{M}_{\bar{\tau}}$, we then define recursively for each tree $\tau = (\tau_1,\dots ,\tau_m) \leq \bar{\tau}$ and for $i=1,\dots,m$
\begin{align}
\label{F-tau-i}
	F_{\tau_i} &:= \pi_{\tau,i}^{\dagger} \circ F_\tau \circ \pi_{\tau,i} \\
\label{Y-tau-i-0}
	Y_{\tau_i}^0 &:= \pi_{\tau,i}^{\dagger}(Y_\tau^0).
\end{align}
In \cite{CeLW21} it is shown how to implement the prolongations and restrictions in an efficient recursive way. 
Moreover, the following two important properties about the prolongation and restriction are proved.
\begin{enumerate}
	\item[(i)] If the tree tensor network $Y_{\bar{\tau}}^0$ has full tree rank $(r_\sigma)_{\sigma \leq \bar{\tau}}$, then $Y_\tau^0$ has full tree rank $(r_\sigma)_{\sigma \leq \tau}$ for every subtree $\tau \leq \bar{\tau}$.
	\item[(ii)] Let $\tau = (\tau_1,\dots ,\tau_m)\in \mathcal{T}$ and $i\in\{1,\dots,m\}$. If $Y_{\tau_i} \in \mathcal{M}_{\tau_i}$, then the prolongation  $\pi_{\tau,i}(Y_{\tau_i})$ is in $\mathcal{M}_\tau$.
\end{enumerate}
\section{Rank-adaptive integrator for extended Tucker tensors} \label{sec:Tucker}
We recapitulate the rank-adaptive integrator for Tucker tensors of \cite{CeKL22} in Section \ref{subsec_rank_adapt} and we extend the algorithm to $r$-tuples of Tucker tensors with common basis matrices in Section \ref{subsec_extend_rank_adapt}. This extension will be important for formulating the rank-adaptive integrator for tree tensor networks.
\subsection{Recap: rank-adaptive integrator for Tucker tensors} \label{subsec_rank_adapt}
We start from a Tucker tensor
\begin{align*}
Y^0 = C^0 \bigtimes_{i=1}^{d} \U_i^0,
\end{align*}
where the basis matrices $\U_i^0\in \C^{n_i\times r_i^0}$ have orthonormal columns and $C^0\in \C^{r_1^0\times\dots\times r_d^0}$ is a tensor of full multilinear rank $(r_1^0,\dots,r_d^0)$. 

\bcl
Rank-adaptivity requires procedures to increase the rank as well as to truncate the rank. The truncation is usually done by a higher-order singular value decomposition (HOSVD) \cite{LMV00}. What distinguishes different approaches to rank-adaptivity is the way how the basis matrices and the core tensor are augmented;  cf. \cite{CeKL22,DeRV21,DS14,DuC21,YaW20}. This requires extra information that is not available from the approximation to the solution at a single point in time. The approach to rank augmentation taken in \cite{CeKL22} is particularly simple and turns out to be very effective and to enhance the qualitative properties of the integrator. A time step from $t_0$ to $t_1$ of the rank-adaptive {\it Basis-Update \& Galerkin} (BUG) integrator for Tucker tensors proposed in \cite{CeKL22} proceeds as follows:
\begin{enumerate}
\item 
(Updated and augmented bases) For $i=1,\dots,d$ (in parallel),  update the basis matrices $\U_i^0\in \C^{n_i\times r_i^0}$ to $\U_i'\in \C^{n_i\times r_i^0}$ and compute an orthonormal basis $\wh \U_i \in \C^{n_i\times \wh r_i}$ (with $\wh r_i \le 2 r_i^0$, typically $\wh r_i = 2 r_i^0$) of the subspace spanned by the columns of  both the {\it old and new} basis matrices $\U_i^0$ and $\U_i'$.
\item (Galerkin method in the augmented subspace $\wh {\mathcal{X}}:= \C^{\wh r_1\times\dots\times \wh r_d} \bigtimes_{i=1}^d \wh{\U}_i$)
\\
Project the tensor differential equation orthogonally onto the space $\wh {\mathcal{X}}$ and solve the initial value problem from $t_0$ to $t_1$,
using the orthogonally projected starting tensor. This yields an update of the initial core tensor $C^0\in \C^{r_1^0\times\dots\times r_d^0}$ 
to an augmented core tensor  ${\wh C}^1\in \C^{\wh r_1\times\dots\times \wh r_d}$, obtained as the solution at time $t_1$ of a differential equation for the core tensor starting from an augmented core tensor ${\wh C}^0\in \C^{\wh r_1\times\dots\times \wh r_d}$ given as
${\wh C}^0 = C^0 \bigtimes_{i=1}^d (\wh \U_i^* \U_i^0)$.
\item (Rank truncation) Truncate the updated and augmented Tucker tensor\\ $\wh Y^1 ={\wh C}^1 \bigtimes_{i=1}^d \wh \U_i$  within a given tolerance to
a modified multilinear rank $(r_1^1,\dots,r_d^1)$, using HOSVD. 
\end{enumerate}
This procedure yields the updated Tucker tensor in factorized form,
$$
Y^1 = C^1 \bigtimes_{i=1}^{d} \U_i^1,
$$
where the basis matrices $\U_i^1\in \C^{n_i\times r_i^1}$ have orthonormal columns and $C^1\in \C^{r_1^1\times\dots\times r_d^1}$ is a tensor of full multilinear rank $(r_1^1,\dots,r_d^1)$. 
This serves as the starting value for the next time step, and so on.

In 1., the $i$th basis is updated by solving the projected tensor differential equation for $Y_i(t)\in \mathcal{X}_i := C^0 \bigtimes_{j\ne i} \U_j^{0} \times_i \C^{n_i \times r_i^0}\subset \C^{n_1\times\dots\times n_d}$, where only the matrix in the $i$th mode is varied:
\begin{align*}
&\langle \dot Y_i(t),X_i \rangle = \langle F(t,Y_i(t)), X_i \rangle & \forall\, X_i \in 
\mathcal{X}_i ,
\\
& \langle Y_i(t_0),X_i \rangle = \langle Y^0, X_i \rangle   &\forall\, X_i \in 
\mathcal{X}_i ,
\end{align*}
which is equivalent to the $n_i \times r_i^0$ matrix differential equation in Algorithm~\ref{alg:Phi-i}. There, an update $\U_i'$ of the orthonormal basis would be obtained as the orthogonal factor in a QR decomposition of $\K_i(t_1)$ \cite{CL21}. Since $\K_i(t_1)$ and $\U_i'$ have the same range, we build the augmented orthonormal basis $\wh\U$ directly from $(\K_i(t_1),\U_i^0)$ instead of the combined new and old bases $(\U_i',\U_i^0)$.

In 2., the core tensor is updated by solving the projected tensor differential equation for $\wh Y(t)\in \wh{\mathcal{X}} := \C^{\wh r_1\times\dots\times \wh r_d} \bigtimes_{i=1}^d \wh\U_i\subset \C^{n_1\times\dots\times n_d}$, where only the core tensor is varied:
\pagebreak[2]
\begin{align*}
&\langle \dot {\wh Y}(t), \wh X \rangle = \langle F(t,\wh Y(t)), \wh X \rangle & \forall\, \wh X \in 
\wh{\mathcal{X}},
\\
& \langle \wh Y(t_0),\wh X \rangle = \langle Y^0, \wh X \rangle  & \forall\, \wh X \in 
\wh{\mathcal{X}},
\end{align*}
which is equivalent to the tensor differential equation in Algorithm~\ref{alg:Psi}.
\ecl

For the precise formulation of the algorithm, it is convenient to introduce subflows $\Phi^{(i)}$ and $\Psi$ which correspond to the updates of the basis matrices and the core tensor, respectively. In addition, as formulated in \cite{CeKL22}, there is the rank truncation algorithm $\Theta$ via HOSVD depending on a given truncation tolerance $\vartheta$.

The approximation $Y^1$ after one time step at time $t_1$ is then obtained by Algorithm~\ref{alg:main}, in which matrices and tensors with doubled rank carry a hat. The maps $\Phi^{(i)}$ and $\Psi$  are defined in Algorithms \ref{alg:Phi-i} and \ref{alg:Psi}, respectively. 
The algorithm can be written schematically as 
\begin{align*}
	Y^1 = \Theta \circ \Psi \circ (\Phi^{(1)},\dots,\Phi^{(d)}) (Y^0).
\end{align*}
The matrix differential equations for $\K_i(t)\in \C^{n_i\times r_i^0}$ in Algorithm 2 and the 
tensor differential equation for $\wh C(t)\in \C^{\wh r_1 \times \dots\times \wh r_d}$ 
are solved approximately using a standard integrator such as a Runge--Kutta method. 

\bigskip
\begin{algorithm}[H]
 \label{alg:main}
	\caption{Rank-adaptive Tucker integrator }
	
	\SetAlgoLined
	\KwData{ 
		Tucker tensor $Y^0 = C^0 \bigtimes_{i=1}^{d} \U_{i}^0$ in factorized form of multilinear rank $(r_{1}^0,\dots ,r_{d}^0)$, function $F(t,Y)$, $t_0,t_1$, tolerance parameter $\vartheta$
	}
	\KwResult{Tucker tensor $Y^1 = C^1 \bigtimes_{i=1}^{d} \U_{i}^1$ in factorized form of multilinear rank $({r}_{1}^1,\dots,{r}_{d}^1)$, where ${r}_{i}^1 \leq 2r_{i}^0$
	}
	\Begin{
		\For{$i=1:d$ {\rm in parallel}}{
			compute $ [ \wh\U_{i},\wh\bfM_{i} ] = \Phi^{(i)} \bigl( Y^0 ,F, t_0,t_1 \bigr)$ \\
			{\tt \% update and augment the $i$th basis matrix (see Algorithm 2)}
   	         } 
		compute $ \wh C^1 = \Psi \bigl( C^0,( \wh\U_{i} )_{i=1}^d , ( \wh\bfM_{i} )_{i=1}^d,F, t_0,t_1  \bigr)$ \\
		{\tt \% augment and update the core tensor (see Algorithm 3)}\\[2mm]
		set $\wh Y^1 = \wh C^1 \bigtimes_{i=1}^{d} \wh \U_{i} $ \\	
		compute $ Y^1 = \Theta\bigl(\wh Y^1, \vartheta \bigr) $ \quad {\tt \% rank truncation}
	}
\end{algorithm}

\bigskip
\begin{algorithm}[H]
\label{alg:Phi-i}
	\caption{Subflow $\Phi^{(i)}$ (update and augment the $i$th basis matrix))}
	
	\SetAlgoLined
	\KwData{ 
		Tucker tensor $Y^0 = C^0 \bigtimes_{j=1}^{d} \U_j^0$ of multilinear rank $(r_1,\dots,r_d)$
		in factorized form, function $F(t,Y)$, $t_0,t_1$
	}
	\KwResult{Updated and augmented basis matrix $\wh\U_i\in\C^{n_i\times \wh r_i}$ (typically $\wh r_i=2r_i$) with orthonormal columns, auxiliary matrix $\wh\bfM_i$
	}
	\Begin{
		compute the QR-decomposition $\mat_i({C^0})^\top = \Q_i^0\S_i^{0,\top}$; \\
		solve the $n_i \times r_i$ matrix differential equation from $t_0$ to $t_1$
		\begin{equation*}
			\dot{\K}_i(t) = \F_i \bigl( t, \K_i(t)\V_i^{0,\top} \bigr) \V_i^0, \quad\ \K_i(t_0) = \U_i^0\S_i^0,
		\end{equation*}
		with $\F_i(t,\cdot ) := \mat_i \circ F(t,\cdot) \circ \ten_i$ and $\V_i^{0,\top} := \Q_i^\top \bigotimes_{j \neq i}^{d} \U_j^{0,\top}$; \\
		compute $\wh \U_i$ as an orthonormal basis of the range of the $n_i \times 2{r}_i$ matrix $\left( \K_i(t_1), \U_i^0 \right)$; \\
		set $\wh\bfM_i = \wh\U_i^* \U_i^0$. \\
	}
\end{algorithm}

\bigskip
\begin{algorithm}[H]
\label{alg:Psi}
	\caption{Subflow $\Psi$ (augment and update the core tensor)}
	
	\SetAlgoLined
	\KwData{core tensor $C^0\in \C^{r_1\times\dots\times r_d}$, augmented basis matrices $\wh\U_{i}\in\C^{n_i\times \wh r_i}$ with orthonormal columns, auxiliary matrices $\wh\bfM_{i}\in \C^{\wh r_i \times r_i}$,  function $F(t,Y)$, $t_0,t_1$
	}
	\KwResult{core tensor $\wh C^1\in \C^{\wh{r}_1 \times \dots  \times \wh{r}_d}$}
		\Begin{
		solve the $\wh{r}_1 \times \dots  \times \wh{r}_d$ tensor differential equation from $t_0$ to $t_1$,
		\begin{equation*}
			\dot{\wh C}(t) = F \bigl( t,\wh C(t) \bigtimes_{i=1}^{d} \wh\U_{i} \bigr) \bigtimes_{i=1}^{d} \wh\U_{i}^*, \quad \wh C(t_0) = C^0 \bigtimes_{i=1}^{d} \wh\bfM_{i};	
		\end{equation*}
	set $\wh C^1 = \wh C(t_1)$	
	}
\end{algorithm}

\subsection{Rank-adaptive integrator for extended Tucker tensors} \label{subsec_extend_rank_adapt}
Consider a tensor in Tucker format of multilinear rank $(r_0,r_1,\dots ,r_d)$, where in the $0$-mode only an $r_0 \times r_0$ identity matrix $\I=\I_{r_0}$ appears, i.e.,
\begin{align*}
	Y^0 = C^0 \times_0 \I \,\bigtimes_{j=1}^{d} \U_j^0.
\end{align*}
This can be viewed as an $r_0$-tuple of Tucker tensors of order $d$ with common basis matrices,
\begin{align*}
	Y^0 = \bigl( C^0_1 \,\bigtimes_{j=1}^{d} \U_j^0, \dots, C^0_{r_0}  \,\bigtimes_{j=1}^{d} \U_j^0\bigr).
\end{align*}

As described in Section \ref{subsec_rank_adapt}, we obtain the approximation $Y^1$ by 
\begin{align*}
	Y^1 = \Theta \circ \Psi \circ (\Phi^{(0)}, \Phi^{(1)},\dots,\Phi^{(d)})  (Y^0).
\end{align*}
The following lemma shows that the subflow $\Phi^{(0)}$ can be ignored in the algorithm.

\begin{lemma}\label{lem:trivial}
With an appropriate choice of orthogonalization, the action of the
subflow $\Phi^{(0)}$ on $Y^0$ becomes trivial, i.e.,
\begin{align*}
	\Phi^{(0)}(Y^0) = [\wh\U_0, \wh \bfM_0] = [\I,\I].
\end{align*} 
\end{lemma}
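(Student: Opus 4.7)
The plan is to trace through Algorithm~\ref{alg:Phi-i} with the special data $\U_0^0 = \I_{r_0}$ and observe that every step collapses. I will first identify the mode-$0$ starting basis, then describe how the augmentation step is forced to pick $\wh\U_0 = \I_{r_0}$, and finally compute $\wh\bfM_0$.

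First I would note that in the extended Tucker format
\begin{equation*}
Y^0 = C^0 \times_0 \I_{r_0} \,\bigtimes_{j=1}^d \U_j^0,
\end{equation*}
the $0$-mode basis matrix is $\U_0^0 = \I_{r_0}\in\C^{r_0\times r_0}$. Thus when we call the subflow $\Phi^{(0)}$ of Algorithm~\ref{alg:Phi-i} with $i=0$, the matrix $\K_0(t_1)$ that emerges from solving the $r_0\times r_0$ matrix differential equation lives in $\C^{r_0\times r_0}$, and we need an orthonormal basis of the range of
\begin{equation*}
\bigl(\K_0(t_1),\ \U_0^0\bigr) = \bigl(\K_0(t_1),\ \I_{r_0}\bigr)\in\C^{r_0\times 2r_0}.
\end{equation*}

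The key observation is that the second block $\I_{r_0}$ already spans the full ambient space $\C^{r_0}$, so the range of $(\K_0(t_1),\I_{r_0})$ equals $\C^{r_0}$ irrespective of $\K_0(t_1)$. The algorithm only requires $\wh\U_0$ to be \emph{some} orthonormal basis of this range; a valid (and the natural) choice is therefore $\wh\U_0 = \I_{r_0}$, which in particular yields $\wh r_0 = r_0$ rather than a doubled rank. Computationally this is obtained, for example, by a QR decomposition of $(\K_0(t_1),\I_{r_0})$ in which the second block is pivoted first, so that the identity is retained as the orthonormal factor.

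With this choice, the auxiliary matrix becomes
\begin{equation*}
\wh\bfM_0 = \wh\U_0^*\,\U_0^0 = \I_{r_0}^*\,\I_{r_0} = \I_{r_0},
\end{equation*}
which establishes $\Phi^{(0)}(Y^0) = [\I,\I]$. I do not expect a real obstacle: the whole content of the lemma is the remark that because the $0$-mode basis is already the identity on $\C^{r_0}$, no nontrivial augmentation can take place, so the $\Phi^{(0)}$ step can simply be skipped in the extended Tucker integrator and the subsequent Galerkin update $\Psi$ inherits the original $0$-mode identity unchanged.
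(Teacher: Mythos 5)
Your proof is correct and follows essentially the same route as the paper: the augmented matrix $(\K_0(t_1),\I_{r_0})\in\C^{r_0\times 2r_0}$ has range equal to all of $\C^{r_0}$ because of the identity block, so $\wh\U_0=\I_{r_0}$ is a valid orthonormal basis and $\wh\bfM_0=\I_{r_0}^*\I_{r_0}=\I_{r_0}$. The extra remark about realizing this choice via a pivoted QR is a harmless implementation detail that the paper leaves implicit in the phrase ``with an appropriate choice of orthogonalization.''
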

\begin{proof}
In the subflow $\Phi^{(0)}$ we solve the $r_0 \times r_0$ matrix differential equation
\begin{align*}
	\dot{\K}_0(t) = \F_i \left( t, \K_0(t)\V_0^{0,\top} \right) \V_0^0, \ \ \ \K_0(t_0) = \U_0^0\S_0^0.
\end{align*}
We define $\K_0^1 = \K_0(t_1)$ as the solution at time $t_1$. Obviously the matrix
\begin{align*}
	( \K_0^1,\U_0^0 ) = (\K_0^1,\I) \in \C^{r_0 \times 2r_0}
\end{align*}
has exactly rank $r_0$. Therefore, the columns of $\wh\U_0 = \I$ form an orthonormal basis of the range of this matrix. Moreover, $\wh\bfM_0 = \wh\U_0^*\U_0^0 = \I^*\I = \I$. 
\end{proof}

In view of Lemma~\ref{lem:trivial}, we write a step of the extended Tucker integrator schematically as
\begin{align*}
	Y^1 = \Theta \circ \Psi \circ (\Phi^{(1)},\dots,\Phi^{(d)})  (Y^0).
\end{align*}

\section{Rank-adaptive tree tensor network integrator} \label{sec:alg}
We present a rank-adaptive integrator for orthonormal tree tensor networks, which evolves the basis matrices and connection tensors.
If $F(t,Y)$ can be evaluated for a tree tensor network $Y$ from its basis matrices and connection tensors, then the algorithm proceeds without ever computing a full tensor. The integrator relies on the extended rank-adaptive integrator of Section \ref{sec:Tucker}. It uses a recursive formulation that is notationally similar to that of the tree tensor network integrator of \cite{CeLW21}. While the latter algorithm is based on the projector-splitting Tucker tensor integrator of \cite{LuVW18}, the integrator considered here is based on the substantially different rank-adaptive Basis Update \& Galerkin Tucker tensor integrator of \cite{CeKL22}. As a consequence, the recursive TTN algorithm presented here is very different from that of \cite{CeLW21} in the algorithmic structure and details. Among other differences, we mention that it is more parallel and does not use backward time evolutions, which are problematic for strongly dissipative problems. Rank-adaptivity is built in in a very simple way, which would not be possible for the projector-splitting TTN algorithm of \cite{CeKL22}.

\subsection{Recursive rank-augmenting TTN integrator}

Consider a tree $\tau = (\tau_1, \dots, \tau_m)$ and
an extended Tucker tensor $Y_\tau^0$ associated with the tree $\tau$,
$$ Y_{\tau}^0  = C_\tau^0 \times_0 \I_\tau \bigtimes_{i=1}^m \U_{\tau_i}^0 \ . $$
Applying the extended Tucker integrator with the function $F_\tau$ without rank truncation, we obtain 
$$ 	\wh Y_\tau^1 = \Psi_\tau \circ (\Phi^{(1)}_\tau, \dots , \Phi^{(m)}_\tau) (Y_\tau^0) \ .$$
We recall that the subflow $\Phi^{(i)}_\tau$ gives the update process of the basis matrix $\U_{\tau_i}^0 \in \R^{n_{\tau_i} \times r_{\tau_i}}$. The extra subscript $\tau$ indicates that the subflow is computed for the function $F_\tau$.

We have two cases: 

\begin{enumerate}[(i)]
	\item
		If $\tau_i$ is a leaf, then we directly apply the subflow $\Phi^{(i)}_\tau$ and update and augment the basis matrix. 
	\item
		Else, we apply the algorithm approximately and recursively. The procedure will still be called $\Phi_{\tau}^{(i)}$. 
		We tensorize the basis matrix and we use new initial data $Y_{\tau_i}^0=\pi_{\tau,i}^\dagger Y_\tau^0$ and the function $F_{\tau_i}= \pi_{\tau,i}^\dagger \circ F_\tau \circ \pi_{\tau,i}$ as described in Section~\ref{subsec:subtrees}. 
		\end{enumerate}
We can now formulate the rank-adaptive integrator for tree tensor networks. It is composed of rank-augmenting integration steps
(Algorithm~\ref{alg:main-tau} with Algorithms~\ref{alg:Phi-tau-i} and \ref{alg:Psi-tau}) followed by a final rank truncation (Algorithm~\ref{alg:truncation-tau}).

\bcl In Algorithm 4, $\wh\U_{\tau_i}\in \C^{n_{\tau_i} \times \wh r_{\tau_i}}$ is the updated and augmented basis matrix (in factorized form unless $\tau_i$ is a leaf), typically with
$\wh r_{\tau_i} =2 r_{\tau_i}^0$, and $\wh \bfM_{\tau_i}= \wh\U_{\tau_i}^*\U_{\tau_i}^0 \in \C^{\wh r_{\tau_i} \times r_{\tau_i^0}}$. The tensor $\wh{C}_\tau^0 \in \C^{\wh r_\tau \times \wh r_{\tau_1}\times \dots \times \wh r_{\tau_m}}$ is the augmented initial connection tensor, and $\wh{C}_\tau^1$ of the same dimension is the updated augmented tensor. $\wh\U_{\tau_i}$ and $\wh{C}_\tau^1$  are the quantities of interest, whereas $\wh \bfM_{\tau_i}$ and $\wh{C}_\tau^0 $ are auxiliary quantities that are passed through in the recursion.
\ecl

The difference to the Tucker integrator is that now the subflow $\Phi_\tau^{(i)}$ 
uses the TTN integrator for the subtrees in a recursion from the leaves to the root. This approximate subflow is defined in close analogy to the subflow $\Phi^{(i)}$ for Tucker tensors, but the differential equation is solved only approximately by recurrence unless $\tau_i$ is a leaf.

\bigskip
\begin{algorithm}[H]
	\caption{Rank-augmenting TTN integrator }
	\label{alg:main-tau}
	
	\SetAlgoLined
	\KwData{ 
		tree $\tau = (\tau_1,\dots ,\tau_m)$, TTN $Y_{\tau}^0 = C_{\tau}^0 \times_0 \I_\tau \bigtimes_{i=1}^{m} \U_{\tau_i}^0$ in factorized form of 
		tree rank $(r_\sigma^0)_{\sigma\le\tau}$, function $F_{\tau}(t,Y_\tau)$, $t_0,t_1$
	}
	\KwResult{TTN $\wh Y_{\tau}^1 = \wh C_{\tau}^1 \times_0 \I_\tau \bigtimes_{i=1}^{d} \wh \U_{\tau_i}$ 
	in factorized form of augmented tree rank $(\wh r_\sigma)_{\sigma\le\tau}$ with $\wh r_\sigma\le 2 r_\sigma^0$,
	augmented connection tensor~$\wh{C}_\tau^0$
	}
	\Begin{
		\For{$i=1:m$ {\rm in parallel}}{
			compute $ [ \wh\U_{\tau_i},\wh \bfM_{\tau_i} ] = \Phi_{\tau}^{(i)} \bigl( Y_\tau^0,F_{\tau}, t_0,t_1 \bigr) $ \\
						{\tt \% update and augment the basis matrix for subtree $\tau_i$ (see Algorithm 5)}
		}
		compute $ [\wh C_{\tau}^1,\wh C_\tau^0] = \Psi_{\tau} \left( C_{\tau}^0,( \wh\U_{\tau_i})_{i=1}^m , ( \wh\bfM_{\tau_i})_{i=1}^m,F_{\tau}, t_0,t_1  \right)  $ \\
	{\tt \% augment and update the connection tensor (see Algorithm 6)}\\[2mm]
		set $\wh{Y}_{\tau}^1 = \wh C_{\tau}^1 \times_0 \I_\tau \bigtimes_{i=1}^{m} \wh\U_{\tau_i} $ 
	}
\end{algorithm}

\bigskip
\begin{algorithm}[H]
	\caption{Subflow $\Phi_{\tau}^{(i)}$ (update and augment a basis matrix)}
	\label{alg:Phi-tau-i}
	\SetAlgoLined
	\KwData{tree $\tau = (\tau_1,\dots ,\tau_m)$, TTN $Y_{\tau}^0 = C_{\tau}^0 \times_0 \I_\tau \bigtimes_{i=1}^{m} \U_{\tau_i}^0$ in factorized form of 
		tree rank $(r_\sigma^0)_{\sigma\le\tau}$, with $\U_{\tau_i}^0=\mat_0(X_{\tau_i}^{0})^\top\in\C^{n_{\tau_i}\times r_{\tau_i}^0}$, function $F_{\tau}(t,Y_\tau)$, $t_0,t_1$
	}
	\KwResult{ $\wh\U_{\tau_i}=\mat_0(\wh X_{\tau_i})^\top\in\C^{n_{\tau_i}\times \wh r_{\tau_i}}$ (typically $\wh r_{\tau_i}=2r_{\tau_i}^0$) in factorized form, auxiliary matrix $\wh\bfM_{\tau_i}\in\C^{\wh r_{\tau_i} \times r_{\tau_i}^0}$
	}
	\Begin{
		compute a QR-decomposition $\mat_i({C_{\tau}^0})^\top = \Q_{\tau_i}^0\S_{\tau_i}^{0,\top}$; \\
		set $Y_{\tau_i}^0 = X_{\tau_i}^0 \times_0 \S_{\tau_i}^{0,\top} $ \\
		\uIf{$\tau_i = l $ {\rm is a leaf}}{
			solve the $n_l \times r_l^0$ matrix differential equation
			\begin{align*}
				\dot{Y}_{l}(t) = F_{l}(t,Y_{l}(t)), \ \ \ Y_{l}(t_0) = Y_{l}^0\in \C^{r_l^0\times n_l}\,;
			\end{align*}
			compute $\wh \U_{l}\in \C^{n_l \times \wh r_l}$ with $\wh r_l\le 2 r_l^0$ as an orthonormal basis of the range of the $n_l \times 2{r}_l^0$ matrix  $\bigl( Y_{l}(t_1)^\top,\U_{l}^{0} \bigr) $; \\
			set $\wh \bfM_{l} = \wh\U_{l}^*\U_{l}^0\in \C^{\wh r_l\times r_l^0}$
		}
		\Else {
			$[\wh Y_{\tau_i}^{1},\wh {C}_{\tau_i}^0] = \text{\it Rank-augmenting TTN integrator\/}(\tau_i,Y_{\tau_i}^0,F_{\tau_i},t_0,t_1);$ \\
			\qquad\qquad\quad  \% {\tt recursive computation on subtrees}\\
			compute an orthonormal basis $\wh \Q_{\tau_i} $ of rank $\wh{r}_{\tau_i} \leq 2r_{\tau_i}^0$ of the range of $\bigl( \mat_0(\wh C_{\tau_i}^1)^\top , \mat_0(\wh {C}_{\tau_i}^0)^\top \bigr) $, where $\wh C_{\tau_i}^1$ is the connection tensor of $\wh Y_{\tau_i}^1$; \\
			set $\wh\U_{\tau_i} = \mat_0(\wh X_{\tau_i})^\top$, 
			where the orthonormal TTN $\wh X_{\tau_i}$ is obtained from $\wh Y_{\tau_i}^1$ by 
		replacing
		the connection tensor with $\widehat C_{\tau_i}= \ten_0(\widehat {\mathbf{Q}}_{\tau_i}^{\top})$; \\
			set $\wh\bfM_{\tau_i} = \wh\U_{\tau_i}^* \U_{\tau_i}^0 
			\in \C^{\wh r_{\tau_i} \times r_{\tau_i}^0}$ (computed as $\langle \wh X_{\tau_i}, X_{\tau_i}^0 \rangle$)
		}
	}
\end{algorithm}

%

\bigskip
\begin{algorithm}[H]
\label{alg:Psi-tau}
	\caption{Subflow $\Psi_\tau$ (augment and update the connection tensor)}
	
	\SetAlgoLined
	\KwData{tree $\tau = (\tau_1,\dots ,\tau_m)$, connection tensor $C_\tau^0\in \C^{r_\tau^0 \times r_{\tau_1}^0\times\dots\times r_{\tau_m}^0}$, augmented basis matrices $\wh\U_{\tau_i}$ in factorized form, auxiliary matrices $\wh\bfM_{\tau_i}\in \C^{\wh r_{\tau_i} \times r_{\tau_i}^0}$,  function $F_\tau(t,Y)$, $t_0,t_1$
	}
	\KwResult{connection tensors $\wh C_\tau^1,\wh C_\tau^0\in \C^{r_\tau \times \wh{r}_{\tau_1} \times \dots  \times \wh{r}_{\tau_m}}$}
		\Begin{
		set $\wh{C}_{\tau}^0 = C_\tau^0 \bigtimes_{i=1}^{m} \wh\bfM_{i}$\,; \\
		solve the $ r_\tau \times \wh{r}_{\tau_1} \times \dots  \times \wh{r}_{\tau_m}$ tensor differential equation from $t_0$ to $t_1$
		\begin{equation*}
			\dot{\wh C_\tau}(t) = F_\tau \bigl( t,\wh C_\tau(t) \bigtimes_{i=1}^{m} \wh\U_{\tau_i} \bigr) \bigtimes_{i=1}^{m} \wh\U_{\tau_i}^*, \quad \wh C_\tau(t_0) = \wh C_\tau^0\,;	
		\end{equation*}
		\\
	set  $\wh C_\tau^1 = \wh C_\tau(t_1)$	
	}
\end{algorithm}

\bigskip
%
We emphasize that the huge matrices $\wh\U_{\tau}$ (when the tree $\tau$ is not a leaf) are never computed as matrices but only their factors in the TTN representation: the connection tensors $\wh C_\sigma$ for subtrees $\sigma \le \tau$ that are not leaves, and the basis matrices $\wh \U_l$ for the leaves.

The low-dimensional matrix differential equations for $Y_{l}(t)$ and low-dimen\-sional tensor differential equations for $C_\tau(t)$ 
are solved approximately using a standard integrator, e.g., an explicit or implicit Runge--Kutta method or an exponential Krylov subspace method when $F$ is linear; note that only linear differential equations appear in the algorithm for linear $F$.

\bcl
\bigskip
\begin{algorithm}[H]
	\caption{Rank truncation $\Theta_\tau$}
	\label{alg:truncation-tau}
	
	\SetAlgoLined
	\KwData{ 
		tree $\tau=(\tau_1,\dots,\tau_m)$, TTN in factorized form\\
		$\qquad \wh X_\tau = \wh C_\tau \times_0 \I_\tau \bigtimes_{i=1}^m \wh \U_{\tau_i}$
		of tree rank $(\wh r_\sigma)_{\sigma\le\tau}$\\
		$\qquad\text{with }  \wh\U_{\tau_i} =\mat_0(\wh X_{\tau_i})^\top$ for a sub-TTN $\wh X_{\tau_i}$,\\
		$\qquad\text{tolerance parameter} \ \vartheta$
	}
	\KwResult{TTN in factorized form ${X}_\tau = C_\tau \times_0 {\I}_\tau \bigtimes_{i=1}^m {\U}_{\tau_i}$\\
	         $\qquad$of tree rank $(r_\sigma)_{\sigma\le\tau}$ with adaptively chosen $r_\sigma \le \wh r_\sigma$\\	         
		$\qquad\text{with } {\U}_{\tau_i} =\mat_0({X}_{\tau_i})^\top \text{ for a rank-truncated sub-TTN } X_{\tau_i}$} 		
		\Begin{
	\For{$i=1:m$ {\rm in parallel}} {
			compute the reduced SVD $\ \mat_i( \wh{C}_\tau )= \wh\bfP_{\tau_i}\mathbf{\wh\Sigma}_{\tau_i}\wh\Q_{\tau_i}^*$; \\
			set $r_{\tau_i}$ to be the smallest integer such that
			\begin{align*}
				\left( \sum\limits_{k=r_{\tau_{i}}+1}^{\wh r_{\tau_i}} \sigma_k^2 \right)^{1/2} \leq \vartheta,
			\end{align*}
			where $\sigma_k$ are the singular values in the diagonal matrix $\mathbf{\wh\Sigma}_{\tau_i}$; \\
			set $\bfP_{\tau_i}\in \C^{\wh r_{\tau_i} \times r_{\tau_i}}$ as the matrix of 
			the first $r_{\tau_i}$ columns of $\widehat{\bfP}_{\tau_i}$; \\
%
%
%
%
%
		\uIf{$\tau_i = l$ \text{\rm is a leaf}} {
			set ${\U}_{l} = \wh\U_{l} \bfP_{l}\in \C^{n_l\times r_l}$
		}
		\Else{
			set $\wt C_{\tau_{i}} = \wh C_{\tau_{i}} \times_0 {\bfP}_{\tau_i}^\top$, where $\wh C_{\tau_{i}}$ is the connection tensor of~$\wh X_{\tau_i};$
			\\ set  $\wt X_{\tau_i}$ to be the TTN in which the connection tensor in $\wh X_{\tau_i}$ 
			 is replaced with $\wt C_{\tau_{i}} $;  \\
			compute ${X}_{\tau_i} = \Theta_{\tau_i}(\wt X_{\tau_i},\vartheta)$  \quad \% {\tt recursive truncation}
			\\
			set $ {\U}_{\tau_i} = \mat_0({X}_{\tau_i})^\top$
		}
		
	}
				set
			$
                         C_\tau = \wh C_\tau  \bigtimes_{i=1}^m \bfP_{\tau_i}^* \in \C^{r_\tau\times r_{\tau_1}\times \dots\times r_{\tau_m}}
                         $
	}
\end{algorithm}
\ecl

\subsection{Recursive adaptive rank truncation} \label{subsec:truncation}
\bcl
For the truncation of a tree tensor network we perform a recursive root-to-leaves SVD-based truncation with a given tolerance $\vartheta$. For binary trees, this could be done by the rank truncation algorithms studied in \cite{G10} and \cite[Sec.\,11.4.2]{H19}. As we are not aware of a formulation and error analysis of a rank truncation algorithm for 
general (not necessarily binary) trees in the literature, we include a derivation and error analysis in the Appendix. 

In Algorithm 7 we give a formulation of the truncation algorithm as we used it in our computations.
\ecl

With the rank augmentation and truncation,  
the integrator chooses its rank adaptively in each time step. For efficiency, it is reasonable to set a maximal rank.

\subsection{Computational complexity}
As in \cite{CeLW21}, counting the required operations and the required memory yields the following result. \bcl Here
we make an assumption on the tensor-valued function $F$ (or rather on its approximation in the algorithm): For every tree tensor network $X_{\bar\tau}$, the function value $Z_{\bar\tau}=F(t,X_{\bar\tau})$ is approximated by a tree tensor network with ranks $s_\tau \le cr$ with $r=\max_\tau r_\tau$ for all subtrees $\tau\le\bar\tau$ with a moderate constant $c$ (e.g., $c=4$, as is the case in our numerical experiments in Section~\ref{sec:num}). \ecl 

\begin{lemma}\label{lem:complexity}
Let $d$ be the order of the tensor $A(t)$ (i.e., the number of leaves of the tree $\bar\tau$), $l<d$ the number of levels (i.e., the height of the tree $\bar\tau$), and let $n=\max_\ell n_\ell$ be the maximal dimension, $r=\max_\tau r_\tau$ the maximal rank and $m$ the maximal order of the connection tensors. \bcl Under the above assumption on the approximation of~$F$, \ecl  one time step of the tree tensor integrator given by Algorithms~\ref{alg:main-tau}--\ref{alg:truncation-tau} requires
\begin{itemize}
\item $O(dr(n+r^{m}))$ storage,
\item $O(ld)$ tensorizations/matricizations of matrices/tensors with $\le r^{m+1}$ entries,
\item $O(ld^2r^2(n+ r^{m}))$ arithmetical operations and
\item $O(d)$ evaluations of the function $F$,
\end{itemize}
provided the differential equations in Algorithms~\ref{alg:Phi-tau-i} and~\ref{alg:Psi-tau} are solved approximately using a fixed number of function evaluations per time step.
\end{lemma}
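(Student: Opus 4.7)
The plan is to bound each of the four quantities (storage, tensorizations/matricizations, arithmetic operations, function evaluations) by tracing through Algorithms~\ref{alg:main-tau}--\ref{alg:truncation-tau} and following the recursive tree structure, essentially as was done for the projector-splitting TTN integrator in \cite{CeLW21}. The orthonormal-TTN storage account is the cleanest step and I would handle it first: the basis matrices at the $\le d$ leaves contribute at most $dnr$ entries, the at most $d-1$ connection tensors contribute at most $dr^{m+1}$ entries, and the augmented versions at most double the ranks, giving $O(dr(n+r^m))$. The auxiliary matrices $\wh\bfM_{\tau_i}$ and $\mathbf{S}_{\tau_i}^{0}$ together contribute only $O(dr^2)$ and are absorbed.

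Next I would count matricizations and tensorizations. Each visit of Algorithms~\ref{alg:Phi-tau-i}, \ref{alg:Psi-tau}, and~\ref{alg:truncation-tau} at a subtree triggers a constant number of matricizations/tensorizations of local objects whose size, by the rank-compatibility assumption $r_{\tau_i}\le\prod_{j\ne i}r_{\tau_j}$, is bounded by $r^{m+1}$. The recursive structure propagates auxiliary quantities up to $l$ levels, so across the $O(d)$ subtrees the count is $O(ld)$.

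For the arithmetic count, I would itemize the per-node costs: the QR factorizations of $\mat_i(C_\tau^0)^\top$ and of the augmented bases cost $O(r^{m+2})$ each; the $\K$-ODE at each leaf costs $O(nr^2)$ per time step (times a fixed number of stages); the tensor ODE in Algorithm~\ref{alg:Psi-tau} reduces, after projecting $F_\tau$ with the augmented bases $\wh\U_{\tau_i}^*$ via recursive contraction, to work of order $O(nr^2+r^{m+1})$ per stage, under the standing assumption that $F$ and all its reductions $F_\tau$ map TTNs to TTNs of rank $\le cr$; the SVDs in Algorithm~\ref{alg:truncation-tau} cost $O(r^{m+2})$ per matricization. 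The dominant contribution comes from evaluating the contraction products used to form $\wh\bfM_{\tau_i}=\langle \wh X_{\tau_i},X_{\tau_i}^0\rangle$ and to apply $\wh\U_{\tau_i}^*$ to $F_\tau(\cdot)$ recursively: by the efficient recursive implementation recalled at the end of Section~\ref{subsec:ttn}, one such contraction costs $O(d(nr^2+r^{m+1}))$. Doing this at each of $O(d)$ subtrees and through $O(l)$ recursion levels gives the total $O(ld^2r^2(n+r^m))$.

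Finally, for function evaluations: each of the $\le d$ leaves triggers one $\K$-ODE solve and each of the $O(d)$ internal nodes triggers one tensor-ODE solve in Algorithm~\ref{alg:Psi-tau}; each such ODE uses a fixed number of evaluations of $F_\tau$ (equivalently, of $F$ on a prolonged argument in $\mathcal{M}_{\bar\tau}$), yielding $O(d)$ evaluations in all. The main obstacle in this analysis is the bookkeeping behind the combined factor $ld^2$ in the arithmetic bound: it requires simultaneously tracking the linear-in-$d$ cost of each recursive contraction product, the linear-in-$d$ number of subtrees where such products are performed, and the extra depth factor $l$ coming from the repeated propagation of auxiliary quantities both up (in Algorithm~\ref{alg:main-tau}) and down (in Algorithm~\ref{alg:truncation-tau}) through the tree. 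Once this accounting is set up, the four bounds follow by summing over nodes and using the rank-compatibility constraint to ensure that every object involved has at most $r^{m+1}$ entries.
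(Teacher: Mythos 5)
Your accounting is correct and matches the approach the paper intends: the paper gives no written proof of this lemma but simply states that it follows by counting operations and memory as in \cite{CeLW21}, and your node-by-node tally (storage from the $\le d$ leaves and $\le d-1$ connection tensors, the depth factor $l$ from the chained prolongations/restrictions of $F_\tau$, the $O(d(nr^2+r^{m+1}))$ cost of each recursive contraction, and one ODE solve with a fixed number of stages per vertex) is precisely that counting. No gaps.
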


The bottleneck of the implementation is an efficient evaluation of the right-hand side function $F_{\bar\tau}(t,Y)\approx F(t,Y)$ of the differential equation \eqref{full-eq}.
We refer to \cite{KT14} for an efficient method of storing and applying linear operators to tree tensor networks.  The operators $F_{\tau}$ for subtrees $\tau < \bar{\tau}$ are efficiently implemented via  prolongation and restriction operators as described in \cite{CeLW21}; see also Section~\ref{subsec:subtrees} for their definition.

\subsection{Alternative TTN integrators}
\label{subsec:alt-ttn-int}
We briefly discuss relations and differences to other TTN integrators.

\smallskip\noindent
{\bf Fixed-rank TTN integrator based on the `unconventional' integrator of~\cite{CL21}.}
The rank-adaptive TTN integrator presented above extends the rank-adaptive integrator for low-rank matrices and Tucker tensors proposed and studied in \cite{CeKL22}, which in turn was conceptually based on the `unconventional' basis-update \& Galerkin low-rank integrator of \cite{CL21}. The latter integrator
 can be extended to tree tensor networks in the same way as above. This yields a fixed-rank TTN integrator which differs from the presented algorithm only in that the matrices of doubled dimension $\bigl( Y_{l}(t_1)^\top,Y_{l}^{0,\top} \bigr)$ and $\bigl( \mat_0(\wh C_{\tau_i}^1)^\top , \mat_0(\wh {C}_{\tau_i}^0)^\top \bigr) $ in Algorithm~\ref{alg:Phi-tau-i}, which contain new and old values, are replaced by taking instead only the new values $Y_{l}(t_1)^\top$ and $\mat_0(\wh C_{\tau_i}^1)^\top$. The rank truncation $\Theta_\tau$ is then not needed. We found, however, that the
integrator presented above in Algorithms~\ref{alg:main-tau}--\ref{alg:truncation-tau}, even when taken as a fixed-rank integrator in which the augmented ranks are always truncated to the original ranks, is more accurate and has better conservation properties. Using in the Galerkin basis both the old and new values instead of only the new values appears to be distinctly favourable.

\smallskip\noindent
{\bf Comparison with the TTN integrator of \cite{CeLW21} based on the projector-splitting integrator.}
A conceptually different fixed-rank TTN integrator was proposed and studied in \cite{CeLW21}. That integrator is based on the projector-splitting integrator for low-rank matrices \cite{LuO14}, which was previously extended to Tucker tensors \cite{Lu15,LuVW18} and tensor trains / matrix product states \cite{LuOV15,HaLOVV16}. The TTN integrator of   \cite{CeLW21} has the same robustness to small singular values as the algorithm presented here. It differs in that some substeps use evolutions backward in time, which are problematic for strongly dissipative problems. It conserves, however, norm and total energy of Schr\"odinger equations exactly up to errors in the integration of the matrix and connection tensor differential equations in the substeps. The TTN integrator of   \cite{CeLW21}
has a more serial structure and thus has less parallelism, and --- of principal interest here --- it does not so easily generalize to a rank-adaptive integrator. We mention, however, that elaborate suggestions for rank-adaptive extensions of the projector-splitting integrator were made in \cite{DeRV21,DuC21,YaW20} in the case of tensor trains / matrix product states.

\section{Exactness property and robust error bound}	  \label{sec:robust}

The rank-adaptive TTN integrator shares fundamental robustness properties with the TTN integrator of~\cite{CeLW21}. 

\subsection{Exactness property} 
The exactness property of Theorem~5.1 in~\cite{CeLW21} states that the TTN integrator exactly reproduces time-dependent tree tensor networks that are of the tree rank used by the integrator. This holds true also for the rank-adaptive TTN integrator without truncation (and also with truncation provided the truncation tolerance $\vartheta$ is sufficiently small). This is shown by using the exactness property of the rank-adaptive integrator for matrices and Tucker tensors as given in~\cite{CeKL22} in a  proof by induction over the height of the tree in the same way as in the proof of Theorem~5.1 in~\cite{CeLW21}.

\subsection{Robust error bound} 
\label{subsec:error-bound}
\bcl
The error bound of Theorem~6.1 in \cite{CeLW21}, which is independent of small singular values of matricizations of the connection tensors, extends to the rank-adaptive TTN integrator with an extra term proportional to the truncation tolerance divided by the stepsize,
$\vartheta/h$, in the $O(h+\eps)$ error bound as in Theorem~2 of~\cite{CeKL22}. 
This is shown by induction over the height of the tree in the same way as in \cite{CeLW21}, using the error bound of the rank-adaptive integrator for matrices and Tucker tensors given in~\cite{CeKL22}, which in turn is based on \cite{KiLW16} and relies on the exactness property.

\bcl
We do not include a detailed proof but give a precise statement of the error bound and its assumptions. The assumptions are those of 
Theorem~6.1 in \cite{CeLW21}, but with the added complication that due to the changing ranks, the TTN manifold is different in every time step.
For a tree $\tau=(\tau_1,\ldots,\tau_m)$ we use the notation $\mathcal{V}_\tau=\C^{r_\tau\times n_{\tau_1}\times \dots \times n_{\tau_m}}$ for the corresponding tensor space 
and $\M_\tau^k=\M_\tau((n_\tau)_{\tau\le\bar\tau},(r_\tau^k)_{\tau\le\bar\tau})$ for the TTN manifold in the $k$th time step; cf.~Section~\ref{subsec:subtrees}. We set $\mathcal{V}=\mathcal{V}_{\bar\tau}$ and
$\mathcal{M}^k=\mathcal{M}_{\bar\tau}^k$ for the full tree $\bar\tau$. We make the following assumptions; cf.~\cite{CeKL22,CeLW21}:

1. We assume that $F:[0,t^*]\times \mathcal{V} \to \mathcal{V}$ is Lipschitz continuous and bounded,
\begin{align}
\label{L-bound}
 & \|F(t,Y) - F(t,\widetilde{Y})\| \le L\|Y-\widetilde{Y}\| & \text{ for all}\ Y,\widetilde{Y} \in \mathcal{V},  
 \\[1mm]
 \label{B-bound}
 & \| F(t,Y) \| \le B & \text{ for all}\ Y \in \mathcal{V}. 
\end{align}\label{eq:F-bound}
Here and in the following, the chosen norm $\|\cdot\|$ is the tensor Euclidean norm. As usual in the numerical analysis of ordinary differential equations, this could be weakened to a local Lipschitz condition and local bound in a neighborhood of 
the exact solution $A(t)$ of the tensor differential equation \eqref{full-eq} to the initial data $A(t_0)=A^0\in\mathcal{V}$. 

2. For $t$ near $t_k=kh$ and $Y$ near the exact solution $A(t)$, we assume that the function value $F(t,Y)$ is in the tangent space $\mathcal{T}_Y\M^k$ up to a small remainder: with $P^k_Y$ denoting the orthogonal
projection onto $\mathcal{T}_Y\M^k$, we assume that for some $\eps>0$,
\begin{equation}\label{eps}
\| F(t,Y) - P^k_Y F(t,Y) \| \le \eps 
\end{equation}
for all $(t,Y)\in [t_k,t_{k+1}]\times \M^k$ \bcl in some ball $\| Y \| \le \rho$, where it is assumed that the exact solution $A(t)$, $0\le t \le t^*$, has a bound that is strictly smaller than $\rho$.

3. The initial value $A^0\in \mathcal{V}$ and the starting value $Y^0\in\M^0$ of the numerical method are assumed to be $\delta$-close:
\begin{equation} \label{init-err}
  \| Y^0-A^0 \| \le \delta.
\end{equation}

\begin{theorem}[Error bound]
  \label{thm:error}
  Under assumptions $1.$--$\,3.$, the error of the numerical approximation $Y^k\in \M^k$ at $t_k=kh$, obtained with $k$ time steps of the rank-adaptive tree tensor network integrator with step size $h>0$ and rank-truncation tolerance $\vartheta$, is bounded by
  \[
    \|Y^k - A(t_k)\| \le c_0\delta + c_1 \eps + c_2 h + c_3 \vartheta/h \qquad\text{for }\ t_k \le t^*,
  \]
where $c_i$ depend only on $L$, $B$, $t^*$, and the tree $\bar\tau$. \bcl This holds true provided that $\delta,\eps,h$ and $\vartheta/h$ are so small that the above error bound guarantees that  $ \|Y^k\| \le \rho$.\ecl
\end{theorem}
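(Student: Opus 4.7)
The plan is to prove the theorem by induction on the height $h(\bar\tau)$ of the tree, following the architecture of the proof of Theorem~6.1 in \cite{CeLW21} but replacing its inner Tucker-level estimate with the rank-adaptive Tucker error bound of \cite[Thm.~2]{CeKL22}. The base case $h(\bar\tau)=1$ is precisely a step of the extended rank-adaptive Tucker integrator from Section~\ref{subsec_extend_rank_adapt}, whose error bound of the desired form is established in \cite{CeKL22}. In the inductive step, for a subtree $\tau$ whose subtrees have smaller height, the rank-augmenting subflow $\Phi_\tau^{(i)}$ on a non-leaf child $\tau_i$ performs a recursive call to the same integrator on $\tau_i$ with data $(Y_{\tau_i}^0,F_{\tau_i})$ obtained via the restriction/prolongation machinery of Section~\ref{subsec:subtrees}.

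\textbf{Local error.} One time step of Algorithm~\ref{alg:main-tau} without truncation is analysed by comparing $\wh Y^1_{\bar\tau}$ with the exact solution of the tangent-space-projected differential equation $\dot Y=P^0_Y F(t,Y)$ on $\M^0$ started from $Y^0_{\bar\tau}$. The defect assumption \eqref{eps} bounds the difference between this projected reference and the true solution of \eqref{full-eq} by $O(\eps h)$, up to an $O(h)$ propagation of the incoming error. The exactness property recalled in Section~\ref{sec:robust} implies that, for dynamics that evolve inside the augmented Galerkin subspace spanned by the old and new bases, the integrator reproduces the solution exactly up to the quadrature error of the low-dimensional matrix and connection-tensor ODEs in Algorithms~\ref{alg:Phi-tau-i} and~\ref{alg:Psi-tau}; this yields an $O(h^2)$ contribution. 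Combined with the HOSVD truncation bound of the Appendix, which ensures $\|Y^1-\wh Y^1_{\bar\tau}\|\le c\vartheta$, a single step contributes $O(h^2+\eps h+\vartheta)$.

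\textbf{Induction and robustness.} For a non-leaf subtree $\tau_i$, one must transfer \eqref{L-bound}, \eqref{B-bound}, and \eqref{eps} from $F_\tau$ to $F_{\tau_i}=\pi_{\tau,i}^\dagger\circ F_\tau\circ\pi_{\tau,i}$. Since $\pi_{\tau,i}^\dagger$ is both the adjoint and a left inverse of $\pi_{\tau,i}$, the Lipschitz and boundedness constants transfer unchanged, and the tangent-projection defect $\eps$ is preserved (with a tree-dependent factor) because of property~(ii) of Section~\ref{subsec:subtrees} that relates the tangent spaces $\mathcal{T}\M_{\tau_i}$ and $\mathcal{T}\M_\tau$ through the prolongation. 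The induction hypothesis then controls the subtree output, and the outer augmented-Galerkin step for the connection tensor $\wh C_\tau^1$ combines these errors via the extended Tucker estimate from Section~\ref{subsec_extend_rank_adapt}. The crucial robustness feature is that the augmented bases always contain the old bases by construction, so no division by small singular values of matricizations of $C_\sigma$ ever enters the estimates.

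\textbf{Global error and main obstacle.} Global error propagation from $t_0$ to $t_k$ follows by a Lady Windermere's fan argument using the Lipschitz constant $L$: accumulating the per-step contributions over $k\le t^*/h$ steps yields $c_2 h$ from $O(h^2)$, $c_1 \eps$ from $O(\eps h)$, and $c_3 \vartheta/h$ from $O(\vartheta)$, while $\delta$ propagates with a factor $e^{Lt^*}$ into $c_0\delta$. The main obstacle is bookkeeping across the changing manifolds $\M^k$: the ranks can both grow (augmentation) and shrink (truncation) between steps, so the exactness property and the $\eps$-condition must be re-applied on the appropriate $\M^k$ at each step while the induction proceeds uniformly in the tree structure. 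A secondary delicate point is to verify that the recursion over at most $l$ layers produces constants $c_i$ that depend only on $\bar\tau$, $L$, $B$ and $t^*$, and in particular remain independent of the individual ranks $r_\tau^k$; this uses that prolongations and restrictions are partial isometries and that the augmented Tucker subproblem at each vertex has dimensions bounded by a fixed multiple of the current ranks.
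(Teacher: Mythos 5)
Your proposal is correct and follows essentially the same route the paper indicates: the paper itself does not give a detailed proof of Theorem~\ref{thm:error} but explicitly states that it is obtained by induction over the height of the tree as in the proof of Theorem~6.1 of \cite{CeLW21}, using the error bound of the rank-adaptive matrix/Tucker integrator of \cite{CeKL22} (which rests on \cite{KiLW16} and the exactness property) together with the truncation bound of Theorem~\ref{thm:err-trunc}, and your sketch reproduces exactly this architecture, including the transfer of assumptions \eqref{L-bound}--\eqref{eps} to subtrees via the prolongations and restrictions and the accumulation of the per-step $O(h^2+\eps h+\vartheta)$ contributions into $c_2h+c_1\eps+c_3\vartheta/h$.
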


The important fact is that the constants $c_i$ are independent of small singular values of matricizations of the connection tensors. This would not be possible by applying a standard integrator to the system of differential equations for the TTN basis matrices and connection tensors that is equivalent to \eqref{proj-eq}, as derived in \cite{WaT03}, since this system becomes near-singular in the case of small singular values.

\ecl

\section{Structure-preserving properties}  \label{sec:cons}
We show that the augmented TTN integrator given by Algorithms~\ref{alg:main-tau}--\ref{alg:Psi-tau}
has remarkable conservation properties: It preserves norm and energy for Schr\"odinger equations, and it diminishes the energy for gradient systems.
 The rank-adaptive TTN integrator, which truncates the result of the rank-augmenting integrator, then has corresponding near-conservation properties
up to a multiple of the truncation tolerance $\vartheta$ (and up to errors in the integration of the low-dimensional differential equations in the substeps, which we disregard here). 
The proof of such properties relies on Lemma~\ref{lem:Y-init} below and Theorem~A.1 in the Appendix. The following results extend conservation results 
in~\cite{CeKL22} from the dynamical low-rank approximation of matrix differential equations to the more intricate general TTN case.

In this section we write $\wh Y_\tau^1$  for the rank-augmented result $\wh X_\tau$ after a time step with step size $h$, and
$Y_\tau^1$ for the rank-truncated result $X_\tau$, which is used as the starting value for the next time step. (We mention that for $\tau\ne \bar\tau$ this notation is not consistent with the notation $\wh Y_\tau^1$ used in Algorithms~\ref{alg:main-tau} and~\ref{alg:Phi-tau-i}, but here it serves us well in comparing quantities at times $t_0$ and $t_1$.)

\subsection{Starting tensors}

We first show that for each subtree $\tau=(\tau_1,\dots,\tau_m)$ of $\bar\tau$, the given starting tensor
$$
Y_\tau^0 = C_\tau^0 \times_0 \I_\tau \bigtimes_{i=1}^m \U_{\tau_i}^0
$$
coincides with the corresponding tensor that has the augmented connection tensor $\wh C_\tau^0$ and basis matrices $\wh \U_{\tau_i}$ constructed in Algorithm~\ref{alg:Phi-tau-i},
$$
\wh Y_\tau^0 = \wh C_\tau^0 \times_0 \I_\tau \bigtimes_{i=1}^m \wh \U_{\tau_i},
$$
which is actually used in Algorithm~\ref{alg:Psi-tau}. The following is a key lemma for the conservation properties proved later in this section.

\begin{lemma} \label{lem:Y-init}
$\wh Y_\tau^0 = Y_\tau^0$.
\end{lemma}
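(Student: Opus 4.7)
The plan is to prove the identity by induction on the height of~$\tau$. The starting point is an algebraic reduction: combining $\wh C_\tau^0 = C_\tau^0 \bigtimes_{i=1}^m \wh \bfM_{\tau_i}$ from Algorithm~\ref{alg:Psi-tau} with $\wh \bfM_{\tau_i} = \wh \U_{\tau_i}^* \U_{\tau_i}^0$ from Algorithm~\ref{alg:Phi-tau-i}, one may rewrite
$\wh Y_\tau^0 = C_\tau^0 \times_0 \I_\tau \bigtimes_{i=1}^m \bigl(\wh \U_{\tau_i}\wh \U_{\tau_i}^* \U_{\tau_i}^0\bigr)$.
Since each $\wh \U_{\tau_i}$ has orthonormal columns (by Lemma~\ref{lem:orth} applied recursively), $\wh \U_{\tau_i}\wh \U_{\tau_i}^*$ is the orthogonal projector onto $\mathrm{range}(\wh \U_{\tau_i})$, so the lemma reduces to showing $\mathrm{range}(\U_{\tau_i}^0) \subseteq \mathrm{range}(\wh \U_{\tau_i})$ for every child index $i=1,\dots,m$.

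For a leaf child $\tau_i=l$ this inclusion is immediate from the leaf branch of Algorithm~\ref{alg:Phi-tau-i}, which defines $\wh \U_l$ as an orthonormal basis of the range of $\bigl(Y_l(t_1)^\top,\U_l^0\bigr)$. For a non-leaf child $\tau_i$ I would trace the recursive branch of the same algorithm: there $\wh \U_{\tau_i}=\mat_0(\wh X_{\tau_i})^\top$, and $\wh X_{\tau_i}$ agrees with the recursively returned $\wh Y_{\tau_i}^1$ except in its root connection tensor, which is replaced by $\wh C_{\tau_i}=\ten_0(\wh \Q_{\tau_i}^\top)$. By construction the columns of $\wh \Q_{\tau_i}$ span the range of $\bigl(\mat_0(\wh C_{\tau_i}^1)^\top,\mat_0(\wh C_{\tau_i}^0)^\top\bigr)$, so in particular $\mat_0(\wh C_{\tau_i}^0)^\top = \wh \Q_{\tau_i} R$ for some matrix $R$, equivalently $\wh C_{\tau_i}^0 = \wh C_{\tau_i} \times_0 R^\top$. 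Contracting the augmented subtree bases $\wh \U_{\sigma_j}$ (for $\tau_i=(\sigma_1,\dots,\sigma_{m'})$) into both sides yields $\wh Y_{\tau_i}^0 = \wh X_{\tau_i} \times_0 R^\top$, and after taking $\mat_0$ and transposing, $\mat_0(\wh Y_{\tau_i}^0)^\top = \wh \U_{\tau_i} R$, so $\mathrm{range}\bigl(\mat_0(\wh Y_{\tau_i}^0)^\top\bigr) \subseteq \mathrm{range}(\wh \U_{\tau_i})$.

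The main obstacle is the final bridge from $\wh Y_{\tau_i}^0$ back to the original $X_{\tau_i}^0$, whose transposed mode-$0$ matricization is precisely $\U_{\tau_i}^0$. Here I would invoke the induction hypothesis applied to the subtree $\tau_i$, namely $\wh Y_{\tau_i}^0 = Y_{\tau_i}^0$, together with the explicit definition of the recursive starting tensor inside $\Phi_\tau^{(i)}$, namely $Y_{\tau_i}^0 = X_{\tau_i}^0 \times_0 \S_{\tau_i}^{0,\top}$, with $\S_{\tau_i}^{0,\top}$ the triangular factor in the QR decomposition of $\mat_i(C_\tau^0)^\top$. Under the standing full-multilinear-rank assumption on $C_\tau^0$, $\mat_i(C_\tau^0)^\top$ has full column rank, hence $\S_{\tau_i}^{0,\top}$ is invertible, and $\mat_0(X_{\tau_i}^0)$ and $\mat_0(\wh Y_{\tau_i}^0)$ share the same row span. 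Combined with the inclusion from the previous paragraph, this gives $\mathrm{range}(\U_{\tau_i}^0) = \mathrm{range}\bigl(\mat_0(\wh Y_{\tau_i}^0)^\top\bigr) \subseteq \mathrm{range}(\wh \U_{\tau_i})$, closing the induction. I expect the difficulty to be essentially notational rather than conceptual, namely keeping the three closely related subtree tensors $X_{\tau_i}^0$, $Y_{\tau_i}^0$ and $\wh Y_{\tau_i}^0$ apart and tracking the invertible mode-$0$ transformations (the $\S_{\tau_i}^{0,\top}$'s and $R$'s) that relate them at each level of the recursion.
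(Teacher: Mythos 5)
Your proof is correct and follows essentially the same route as the paper's: reduce to the range inclusion $\mathrm{Range}(\U_{\tau_i}^0)\subseteq\mathrm{Range}(\wh\U_{\tau_i})$ via the orthogonal projector $\wh\U_{\tau_i}\wh\U_{\tau_i}^*$, and establish that inclusion by induction over the height of the tree using the fact that the columns of $\wh\Q_{\tau_i}$ span the range of $\mat_0(\wh C_{\tau_i}^0)^\top$. The only (harmless) difference is that you take the lemma itself as the induction hypothesis at the children and pass through the invertible factor $\S_{\tau_i}^{0,\top}$, whereas the paper inducts directly on the range inclusion and rewrites $\U_\sigma^0 = \bigotimes_j\wh\U_{\sigma_j}\,\mat_0(\wh C_\sigma^0)^\top$ via the matricization formula \eqref{eq:mat_tucker}.
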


\begin{proof} 
Since $ \wh C_\tau^0 =   C_\tau^0 \bigtimes_{i=1}^m \wh \bfM_{\tau_i}$ in
Algorithm~\ref{alg:Psi-tau} with $\wh\bfM_{\tau_i}=\wh \U_{\tau_i}^*\U_{\tau_i}^0$,
we have
\begin{align*}
\wh Y_\tau^0 &= \bigl( C_\tau^0 \bigtimes_{i=1}^m \wh \U_{\tau_i}^*\U_{\tau_i}^0 \bigr)
\times_0 \I_{\tau} \bigtimes_{i=1}^m \wh \U_{\tau_i}
\\
&= C_\tau^0 \times_0 \I_{\tau} \bigtimes_{i=1}^m \wh\U_{\tau_i}\wh \U_{\tau_i}^* \U_{\tau_i}^0.
\end{align*}
We will show by induction over the height of the tree that
\begin{equation}\label{ranges-U}
\text{Range}(\U_{\tau_i}^0) \subseteq \text{Range}(\wh\U_{\tau_i}).
\end{equation}
Since $\wh\U_{\tau_i}\wh\U_{\tau_i}^*$ is the orthogonal projection onto $\text{Range}(\wh\U_{\tau_i})$, this implies
$$
\wh\U_{\tau_i}\wh\U_{\tau_i}^* \U_{\tau_i}^0 = \U_{\tau_i}^0, \qquad i=1,\dots,m,
$$
and hence we obtain 
$$
\wh Y_\tau^0 = C_\tau^0 \times_0 \I_{\tau} \bigtimes_{i=1}^m \U_{\tau_i}^0 = Y_\tau^0.
$$
It remains to prove \eqref{ranges-U}. If $\tau_i=l$ is a leaf, then this is obvious from the definition of $\wh\U_l$ in Algorithm~\ref{alg:Phi-tau-i}.
Else we write the tree $\tau_i$ as $\sigma=(\sigma_1,\dots,\sigma_k)$ with direct subtrees $\sigma_j$ and use the induction hypothesis that 
$\text{Range}(\U_{\sigma_j}^0) \subseteq \text{Range}(\wh\U_{\sigma_j})$ for all $j=1,\dots,k$. This implies that
$$
\U_{\sigma_j}^0 = \wh\U_{\sigma_j} \wh\U_{\sigma_j}^* \U_{\sigma_j}^0 = \wh\U_{\sigma_j} \wh\bfM_{\sigma_j}.
$$
By the construction of the tree tensor network, we have
$$
\U_\sigma^0 = \mat_0(Y_\sigma^0)^\top \quad\text{ with }\quad
Y_\sigma^0 = C_\sigma^0 \times \I_\sigma \bigtimes_{j=1}^k \U_{\sigma_j}^0.
$$
So we obtain from the matricization formula \eqref{eq:mat_tucker} that
\begin{align*}
\U_\sigma^0 &= \bigotimes_{j=1}^k \U_{\sigma_j}^0 \mat_0(C_\sigma^0)^\top
= \bigotimes_{j=1}^k \wh\U_{\sigma_j} \wh\bfM_{\sigma_j} \mat_0(C_\sigma^0)^\top
\\
&= \bigotimes_{j=1}^k \wh\U_{\sigma_j} \mat_0\bigl(C_\sigma^0\bigtimes_{j=1}^k  \wh\bfM_{\sigma_j} \bigr)^\top.
\end{align*}
Since we have $ \wh C_\sigma^0 =   C_\sigma^0 \bigtimes_{j=1}^k \wh \bfM_{\sigma_j}$ in
Algorithm~\ref{alg:Psi-tau}, we finally obtain
$$
\U_\sigma^0 = \bigotimes_{j=1}^k \wh\U_{\sigma_j}  \mat_0(\wh C_\sigma^0)^\top.
$$
On the other hand,
in Algorithm~\ref{alg:Phi-tau-i} we construct
$$
\wh \U_\sigma =  \bigotimes_{j=1}^k \wh\U_{\sigma_j}  \mat_0(\wh C_\sigma)^\top,
$$
where the columns of $\mat_0(\wh C_\sigma)^\top$ form an orthogonal basis of the range of the augmented matrix
$(\mat_0(\wh C_\sigma^1)^\top,\mat_0(\wh C_\sigma^0)^\top)$. We therefore have 
$$
\text{Range}(\mat_0(\wh C_\sigma^0)^\top) \subseteq \text{Range}(\mat_0(\wh C_\sigma)^\top),
$$
and in view of the above formulas for $\U_\sigma^0$ and $\wh \U_\sigma$, this implies the relation \eqref{ranges-U} for $\sigma=\tau_i$.
\end{proof}

\subsection{Norm conservation}
If the function $F$ on the right-hand side of the tensor differential equation \eqref{full-eq} satisfies,
with $\langle\cdot,\cdot\rangle$ denoting the Euclidean inner product of vectorizations,
\begin{equation} \label{norm-cons}
	\Re\!\langle Y, F(t,Y) \rangle = 0 \ \ \ \text{for all}  \ Y \in \C^{n_1\times\dots\times n_d} \ \text{and for all} \ t, 
\end{equation}
then the Euclidean norm of every solution $A(t)$ of  \eqref{full-eq} is conserved: $\| A(t) \| = \| A(t_0) \|$ for all $t$. Norm conservation also holds true for the rank-augmented TTN integrator, as is shown by the following result.

\begin{theorem} If $F$ satisfies \eqref{norm-cons}, then a step of the rank-augmented TTN integrator preserves the norm: for every stepsize $h$ and for every subtree $\tau$ of~$\bar\tau$,
$$
\| \wh Y_\tau^1 \| = \| Y_\tau^0 \|.
$$
\end{theorem}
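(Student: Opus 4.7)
The plan is to prove this by reducing the question to norm conservation of the small connection-tensor ODE solved in Algorithm~\ref{alg:Psi-tau}. Recall from Algorithm~\ref{alg:main-tau} that
$$
\wh Y_\tau^1 \,=\, \wh C_\tau^1 \times_0 \I_\tau \bigtimes_{i=1}^m \wh\U_{\tau_i},
$$
where $\wh C_\tau^1 = \wh C_\tau(t_1)$ is obtained by integrating the ODE of Algorithm~\ref{alg:Psi-tau} from the initial data $\wh C_\tau^0$. Since the augmented basis matrices $\wh\U_{\tau_i}$ have orthonormal columns (either by direct QR in Algorithm~\ref{alg:Phi-tau-i}, or recursively via Lemma~\ref{lem:orth} applied to $\wh X_{\tau_i}$), the map $\wh C\mapsto \wh C\times_0\I_\tau \bigtimes_{i=1}^m\wh\U_{\tau_i}$ is an isometry, so $\|\wh Y_\tau^1\|=\|\wh C_\tau(t_1)\|$ and $\|\wh Y_\tau^0\|=\|\wh C_\tau^0\|$. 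By Lemma~\ref{lem:Y-init} we have $\wh Y_\tau^0=Y_\tau^0$, hence $\|\wh C_\tau^0\|=\|Y_\tau^0\|$, and the theorem reduces to proving $\|\wh C_\tau(t)\|=\|\wh C_\tau(t_0)\|$ along the trajectory of the Galerkin ODE.

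Setting $\wh Y_\tau(t)=\wh C_\tau(t)\times_0\I_\tau\bigtimes_{i=1}^m\wh\U_{\tau_i}$, I would differentiate the squared norm and exploit orthonormality to move the $\wh\U_{\tau_i}^*$ factors in Algorithm~\ref{alg:Psi-tau} to the other side of the inner product:
\begin{align*}
\tfrac12\tfrac{\dr}{\dr t}\|\wh C_\tau(t)\|^2
&= \Re\bigl\langle \wh C_\tau(t),\, F_\tau(t,\wh Y_\tau(t))\bigtimes_{i=1}^m \wh\U_{\tau_i}^*\bigr\rangle \\
&= \Re\bigl\langle \wh C_\tau(t)\bigtimes_{i=1}^m \wh\U_{\tau_i},\, F_\tau(t,\wh Y_\tau(t))\bigr\rangle
\,=\, \Re\bigl\langle \wh Y_\tau(t),\, F_\tau(t,\wh Y_\tau(t))\bigr\rangle.
\end{align*}
Thus it suffices to verify that every reduced operator $F_\tau$ inherits the skew-Hermitian relation \eqref{norm-cons} from $F=F_{\bar\tau}$.

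The main obstacle—and the one delicate step—is precisely this inheritance of \eqref{norm-cons} under the restriction--prolongation construction \eqref{F-tau-i}. I would prove it by induction on the height of the tree, starting from the assumption that $\Re\langle Z,F_\tau(t,Z)\rangle=0$ for all $Z\in\mathcal{V}_\tau$. For a direct subtree $\tau_i$ and any $Y_{\tau_i}\in\mathcal{V}_{\tau_i}$, the adjointness of $\pi_{\tau,i}^\dagger$ with respect to $\pi_{\tau,i}$ (established in Section~\ref{subsec:subtrees}) gives
\begin{align*}
\Re\bigl\langle Y_{\tau_i},\, F_{\tau_i}(t,Y_{\tau_i})\bigr\rangle
&= \Re\bigl\langle Y_{\tau_i},\, \pi_{\tau,i}^\dagger\, F_\tau(t,\pi_{\tau,i}(Y_{\tau_i}))\bigr\rangle \\
&= \Re\bigl\langle \pi_{\tau,i}(Y_{\tau_i}),\, F_\tau(t,\pi_{\tau,i}(Y_{\tau_i}))\bigr\rangle \,=\, 0,
\end{align*}
so \eqref{norm-cons} passes to the child. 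By induction it holds for every subtree $\tau\le\bar\tau$.

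Combining the three ingredients: the inherited skew-Hermitian property makes the derivative above vanish identically on $[t_0,t_1]$, so $\|\wh C_\tau(t_1)\|=\|\wh C_\tau(t_0)\|=\|\wh C_\tau^0\|$; the isometry property of orthonormal basis matrices then yields $\|\wh Y_\tau^1\|=\|\wh C_\tau^0\|=\|\wh Y_\tau^0\|$; and Lemma~\ref{lem:Y-init} finally gives $\|\wh Y_\tau^1\|=\|Y_\tau^0\|$, as claimed.
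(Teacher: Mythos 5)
Your proof is correct and follows essentially the same route as the paper's: establish that the skew-Hermitian property \eqref{norm-cons} is inherited by every reduced operator $F_\tau$ via the adjointness of $\pi_{\tau,i}^\dagger$ and $\pi_{\tau,i}$, use orthonormality of the augmented bases to identify $\|\wh Y_\tau(t)\|$ with $\|\wh C_\tau(t)\|$, show the derivative of the squared norm vanishes along the Galerkin ODE of Algorithm~\ref{alg:Psi-tau}, and close the argument with Lemma~\ref{lem:Y-init}. The only cosmetic difference is the order in which the ingredients are presented.
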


By Theorem~A.1, this implies near-conservation of the norm up to a multiple of the truncation tolerance $\vartheta$ for the rank-adaptive TTN integrator:
$$
\| Y_\tau^0 \| - c_\tau \vartheta \le \| Y_\tau^1 \| \le \| Y_\tau^0 \| ,
$$
with $c_\tau =\| C_{\tau} \| (d_{\tau}-1) +1$, as in Theorem~A.1.

\begin{proof} For every subtree $\tau=(\tau_1,\dots,\tau_m) \le \bar\tau$, the reduced nonlinear operator $F_\tau$ is defined in Section~\ref{subsec:subtrees} by the recursion \eqref{F-tau-i} from the root to the leaves, starting from $F_{\bar\tau}=F$. We first observe that 
\begin{equation} \label{norm-cons-tau}
	\Re\langle Y_\tau, F_\tau(t,Y_\tau) \rangle = 0 \ \ \ \text{for all}  \ Y_\tau \in \mathcal{V}_\tau \ \text{and for all} \ t, 
\end{equation}
which follows by induction from the root to the leaves, noting
$$
\langle Y_{\tau_i}, F_{\tau_i}(t,Y_{\tau_i}) \rangle = \langle Y_{\tau_i}, \pi_{\tau,i}^\dagger F_\tau(t,\pi_{\tau,i}Y_{\tau_i}) \rangle =
\langle \pi_{\tau,i}Y_{\tau_i},  F_\tau(t,\pi_{\tau,i}Y_{\tau_i}) \rangle 
$$
and $\pi_{\tau,i}Y_{\tau_i}\in \mathcal{V}_\tau$.

We now turn to the differential equation in Algorithm~\ref{alg:Psi-tau}. We have, for $k=0$ and $k=1$, 
$$
\widehat{Y}_\tau^k = \widehat{C}_\tau^k \bigtimes_{i=1}^m \widehat{\U}_{\tau_i} \quad\text{ and hence}\quad
 \|\widehat{Y}_\tau^k \| = \| \widehat{C}_\tau^k \|,
$$
since $\widehat{\U}_{\tau_i}$ 
have orthonormal columns for $i=1,\dots,m$.
We recall that $\widehat{C}_\tau^1$ is the solution at $t_1$ of the differential equation
\begin{align*}
	\dot{\widehat{C}}_\tau (t) = F_\tau \left( t, \widehat{C}_\tau(t)\bigtimes_{i=1}^m \widehat{\U}_{\tau_i} \right) \bigtimes_{i=1}^m \widehat{\U}_{\tau_i}^{*}
\end{align*}
with initial value $\wh C_\tau(t_0)= \wh C_\tau^0$.
We then get
\begin{align*}
	\frac{1}{2} \frac{d}{dt} \| \widehat{C}_\tau(t) \|^2 &= \Re \langle \widehat{C}_\tau(t), \dot{\widehat{C}}_\tau(t) \rangle 
	= \Re\langle \widehat{C}_\tau(t), F_\tau ( t, \widehat{C}_\tau(t) \bigtimes_{i=1}^m \widehat{\U}_{\tau_i} ) \bigtimes_{i=1}^m \widehat{\U}_{\tau_i}^{*} \rangle \\
	&= \Re\langle \widehat{C}_\tau(t) \bigtimes_{i=1}^m \widehat{\U}_{\tau_i}, F_\tau ( t, \widehat{C}_\tau(t) \bigtimes_{i=1}^m \widehat{\U}_{\tau_i})  \rangle = 0 \quad\text{ by \eqref{norm-cons-tau}.}
\end{align*}
Therefore we have 
\begin{align*}
	\| \widehat{Y}_\tau^1 \| = \| \widehat{C}_\tau^1 \| = \| \widehat{C}_\tau (t_1) \| = \| \widehat{C}_\tau (t_0) \| = \| \wh C_\tau^0\| = \| \widehat{Y}_\tau^0 \| = \| Y_\tau^0 \|,
\end{align*}
where the last equality comes from Lemma \ref{lem:Y-init}. 
\end{proof}

\subsection{Energy conservation for Schr\"odinger equations}
Consider the tensor Schr\"odinger equation 
\begin{align}
	\mathrm{i}\dot{A}(t) = H [ A(t) ], \label{Schr_Eq}
\end{align}
with a Hamiltonian $H: \C^{n_1 \times \cdots \times n_d} \rightarrow \C^{n_1 \times \cdots \times n_d}$ that is linear and self-adjoint, i.e.,
	$\langle H[Y],Z \rangle = \langle Y, H[Z] \rangle$ for all
	 $Y,Z \in \C^{n_1 \times \cdots \times n_d}$.
The energy
\begin{align*}
	\mathrm{E}(Y) = \langle Y, H[Y] \rangle
\end{align*}
is preserved along solutions of \eqref{Schr_Eq}: $\mathrm{E}(A(t))=\mathrm{E}(A(t_0))$ for all $t$.

\begin{theorem} The rank-augmented TTN integrator preserves the energy: for every stepsize $h$,
$$
\mathrm{E}(\wh Y_{\bar\tau}^1)  = \mathrm{E}(Y_{\bar\tau}^0) .
$$
\end{theorem}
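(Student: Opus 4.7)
My plan is to reduce the proof to the standard Galerkin energy-preservation argument for a Hamiltonian system on a fixed linear subspace. The only step of the rank-augmenting integrator that affects the final tensor $\wh Y_{\bar\tau}^1 = \wh C_{\bar\tau}^1 \times_0 \I_{\bar\tau}\bigtimes_{i=1}^m \wh\U_{\bar\tau_i}$ beyond the construction of the augmented orthonormal basis matrices $\wh\U_{\bar\tau_i}$ is the differential equation for $\wh C_{\bar\tau}(t)$ solved in Algorithm~\ref{alg:Psi-tau} at $\tau=\bar\tau$. I would therefore first invoke Lemma~\ref{lem:Y-init} at the root to identify the initial data of this ODE with the input tensor, namely $\wh C_{\bar\tau}^0 \times_0 \I_{\bar\tau}\bigtimes_{i=1}^m \wh\U_{\bar\tau_i} = Y_{\bar\tau}^0$, so that it suffices to prove $\mathrm{E}(\wh Y_{\bar\tau}(t_1)) = \mathrm{E}(\wh Y_{\bar\tau}(t_0))$ for the parametrized tensor $\wh Y_{\bar\tau}(t) := \wh C_{\bar\tau}(t)\bigtimes_{i=1}^m \wh\U_{\bar\tau_i}$.

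Since $F_{\bar\tau} = -\mathrm{i} H$, Algorithm~\ref{alg:Psi-tau} at the root solves
\begin{equation*}
\dot{\wh C}_{\bar\tau}(t) = -\mathrm{i}\, H\bigl[\wh C_{\bar\tau}(t)\bigtimes_{i=1}^m \wh\U_{\bar\tau_i}\bigr]\bigtimes_{i=1}^m \wh\U_{\bar\tau_i}^{*} ,
\end{equation*}
and because the $\wh\U_{\bar\tau_i}$ are time-independent, the full tensor obeys $\dot{\wh Y}_{\bar\tau}(t) = -\mathrm{i}\,\wh P H[\wh Y_{\bar\tau}(t)]$, where I set $\wh P X := (X\bigtimes_{i=1}^m \wh\U_{\bar\tau_i}^{*})\bigtimes_{i=1}^m \wh\U_{\bar\tau_i}$. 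Orthonormality of the columns of $\wh\U_{\bar\tau_i}$ makes $\wh P$ the orthogonal projector onto the subspace they span; in particular $\wh P$ is self-adjoint and $\wh P \wh Y_{\bar\tau}(t) = \wh Y_{\bar\tau}(t)$.

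With these ingredients, I would differentiate the energy along the flow:
\begin{equation*}
\tfrac{1}{2}\tfrac{d}{dt}\mathrm{E}(\wh Y_{\bar\tau}(t)) = \Re\langle \dot{\wh Y}_{\bar\tau}(t), H[\wh Y_{\bar\tau}(t)]\rangle = -\Re\bigl(\mathrm{i}\,\langle \wh P H[\wh Y_{\bar\tau}(t)], H[\wh Y_{\bar\tau}(t)]\rangle\bigr),
\end{equation*}
using self-adjointness of $H$ in the first step and the ODE in the second. Self-adjointness of $\wh P$ together with $\wh P \wh Y_{\bar\tau}(t) = \wh Y_{\bar\tau}(t)$ lets me rewrite the inner product as $\|\wh P H[\wh Y_{\bar\tau}(t)]\|^2 \in \mathbb{R}$, so the real part of $\mathrm{i}$ times a real number vanishes. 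Hence $\mathrm{E}(\wh Y_{\bar\tau}(t))$ is constant along the flow, and combined with Lemma~\ref{lem:Y-init} this yields the claim.

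The argument is conceptually the same symplectic Galerkin preservation one uses for the Dirac--Frenkel variational principle on a fixed linear subspace; there is no deep obstacle. The only step worth emphasizing is that Lemma~\ref{lem:Y-init} is indispensable here: it is what guarantees that the Galerkin flow actually starts from $Y_{\bar\tau}^0$ rather than from some approximation introduced by the change of bases from $\U_{\tau_i}^0$ to $\wh\U_{\tau_i}$, and without that identification the initial energy $\mathrm{E}(\wh Y_{\bar\tau}(t_0))$ would already differ from $\mathrm{E}(Y_{\bar\tau}^0)$.
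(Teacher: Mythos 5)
Your proposal is correct and follows essentially the same route as the paper: both invoke Lemma~\ref{lem:Y-init} to identify the initial data of the Galerkin flow with $Y_{\bar\tau}^0$, and both show $\frac{d}{dt}\mathrm{E}(\wh Y_{\bar\tau}(t))=0$ by reducing the derivative to the real part of $\mathrm{i}$ times a nonnegative real number. The only cosmetic difference is that you package the mode products with $\wh\U_{\tau_i}$ and $\wh\U_{\tau_i}^*$ into a self-adjoint idempotent projector $\wh P$, whereas the paper moves the basis matrices across the inner product explicitly; the computation is identical.
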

By  the Cauchy--Schwarz inequality and Theorem~A.1,  this implies for the rank-adaptive TTN integrator that
\begin{align*}
	\vert \mathrm{E}(Y_{\bar\tau}^1 ) - \mathrm{E}({Y}_{\bar\tau}^0 ) \vert &= \vert \mathrm{E}(Y_{\bar\tau}^1 ) - \mathrm{E}(\wh{Y}_{\bar\tau}^1 ) \vert 
	 =\vert \text{Re} \langle Y_{\bar\tau}^1 -\wh{Y}_{\bar\tau}^1, H[Y_{\bar\tau}^1 +\wh{Y}_{\bar\tau}^1] \rangle \vert 
	\\
	&\leq c_{\bar\tau} \vartheta \, \| H[Y_{\bar\tau}^1 +\wh{Y}_{\bar\tau}^1] \|.
\end{align*}

\begin{proof} In Algorithm~\ref{alg:Psi-tau} we have for
	 $\bar\tau=(\tau_1,\dots,\tau_m)$ and $\wh{Y}_{\bar\tau}(t) = \wh{C}_{\bar\tau} (t) \bigtimes_{j=1}^m \wh{\U}_{\tau_j}$
\begin{align*}
	\frac{d}{dt} \mathrm{E}(\wh{Y}_{\bar\tau}(t) ) &= 2\,\text{Re}\langle H[\wh{Y}_{\bar\tau}(t)], \dot{\wh{C}}_{\bar\tau} (t) \bigtimes_{j=1}^m \wh{\U}_{\tau_j} \rangle \\
	&= 2\,\text{Re}\langle H[\wh{Y}_\tau(t)]  \bigtimes_{j=1}^m \wh{\U}_{\tau_j}^* , \dot{\wh{C}}_{\bar\tau} (t) \rangle \\
	&= 2\,\text{Re}\langle H[\wh{Y}_\tau(t)]  \bigtimes_{j=1}^m \wh{\U}_{\tau_j}^* , -\mathrm{i}H[\wh{Y}_{\bar\tau}(t)] \bigtimes_{j=1}^m \wh{\U}_{\tau_j}^* \rangle \\
	&= -2\,\text{Re} \ \mathrm{i}\, \| H[\wh{Y}_\tau(t)]  \bigtimes_{i=j}^m \wh{\U}_{\tau_j}^* \|^2 = 0.
\end{align*}
Using $\wh{Y}_\tau^1 = \wh{Y}_\tau(t_1)$ and Lemma \ref{lem:Y-init}, we obtain
\begin{align*}
	\mathrm{E}(\wh{Y}_\tau^1) = \mathrm{E}(\wh{Y}_\tau(t_1)) = \mathrm{E}(\wh{Y}_\tau(t_0) ) = \mathrm{E}(\wh Y_\tau^0)=\mathrm{E}(Y_\tau^0),
\end{align*}
which is the stated result.
\end{proof}

We note further that for each subtree $\tau\le \bar\tau$, the reduced operator $F_\tau$ of Section~\ref{subsec:subtrees} that corresponds to $F_{\bar\tau}(Y)=-\mathrm{i} H[Y]$, is again a Schr\"odinger operator $F_\tau(Y_\tau)=-\mathrm{i} H_\tau[Y_\tau]$ with a self-adjoint linear operator $H_\tau:\mathcal{V}_\tau \to \mathcal{V}_\tau$, since  we have recursively $H_{\tau_i}= \pi_{\tau,i}^\dagger H_\tau \pi_{\tau,i}$ for the $i$th subtree $\tau_i$ of $\tau$. Hence, the integrator preserves the energy on the level of each subtree.

\subsection{Energy decay for gradient systems}
We now consider the case where the tensor differential equation \eqref{full-eq} is a gradient system 
$$
\dot A(t) = -\nabla \mathrm{E}(A(t)) \quad\text{ with a given function}\ \mathrm{E}:\R^{n_1 \times \cdots \times n_d} \rightarrow \R.
$$
Along every solution, we then have the energy decay
\begin{align*}
	\frac{d}{dt} \mathrm{E}(A(t)) = \langle \nabla \mathrm{E}(A(t)),\dot{A}(t) \rangle = - \| \nabla \mathrm{E}(A(t)) \|^2.
\end{align*}

\begin{theorem} The rank-augmented TTN integrator diminishes the energy: for every stepsize $h>0$,
$$
\mathrm{E}(\wh Y_{\bar\tau}^1)  \le  \mathrm{E}(Y_{\bar\tau}^0)  - \alpha^2 h,
$$
where $\alpha =  \min_{0 \leq \mu \leq 1} \| \nabla \mathrm{E}(\wh{Y}_{\bar\tau}(t_0 + \mu h)) \bigtimes_{i=1}^{m} \wh{\U}_{\tau_i}^* \| = \| \nabla \mathrm{E}({Y}_{\bar\tau}^0) \bigtimes_{i=1}^{m} \wh{\U}_{\tau_i}^* \| + O(h)$.
\end{theorem}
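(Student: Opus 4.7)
The plan is to proceed in full analogy with the energy-conservation proof for Schrödinger equations, replacing the antisymmetric structure $F(Y) = -\mathrm{i}H[Y]$ by the gradient-flow structure $F(Y) = -\nabla \mathrm{E}(Y)$. The core of the argument lives entirely in Algorithm~\ref{alg:Psi-tau} applied at the root $\bar\tau = (\tau_1,\dots,\tau_m)$: the augmented bases $\wh\U_{\tau_i}$ are frozen during the substep, and only the augmented connection tensor $\wh{C}_{\bar\tau}(t)$ evolves on $[t_0,t_1]$. Writing $\wh Y_{\bar\tau}(t) = \wh C_{\bar\tau}(t) \bigtimes_{i=1}^m \wh\U_{\tau_i}$, the differential equation in Algorithm~\ref{alg:Psi-tau} reads
\begin{equation*}
\dot{\wh C}_{\bar\tau}(t) = -\nabla \mathrm{E}\bigl(\wh Y_{\bar\tau}(t)\bigr) \bigtimes_{i=1}^m \wh\U_{\tau_i}^*.
\end{equation*}

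The first step is to differentiate the energy along this flow. By the chain rule and orthonormality of the $\wh\U_{\tau_i}$,
\begin{equation*}
\frac{d}{dt}\mathrm{E}\bigl(\wh Y_{\bar\tau}(t)\bigr)
= \bigl\langle \nabla \mathrm{E}(\wh Y_{\bar\tau}(t)),\, \dot{\wh C}_{\bar\tau}(t)\bigtimes_{i=1}^m \wh\U_{\tau_i}\bigr\rangle
= \bigl\langle \nabla \mathrm{E}(\wh Y_{\bar\tau}(t))\bigtimes_{i=1}^m \wh\U_{\tau_i}^*,\, \dot{\wh C}_{\bar\tau}(t)\bigr\rangle,
\end{equation*}
and substituting the differential equation yields
\begin{equation*}
\frac{d}{dt}\mathrm{E}\bigl(\wh Y_{\bar\tau}(t)\bigr) = -\Bigl\|\nabla \mathrm{E}(\wh Y_{\bar\tau}(t))\bigtimes_{i=1}^m \wh\U_{\tau_i}^*\Bigr\|^2.
\end{equation*}
This is the exact analogue of the identity used in the Schrödinger case, but now with a nonpositive right-hand side rather than a purely imaginary one vanishing after taking real parts.

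The second step is to integrate this identity from $t_0$ to $t_1=t_0+h$ and to bound the integrand uniformly from above by $-\alpha^2$, where $\alpha$ is the stated minimum of the projected gradient norms over $\mu\in[0,1]$. This gives $\mathrm{E}(\wh Y_{\bar\tau}(t_1)) - \mathrm{E}(\wh Y_{\bar\tau}(t_0)) \le -\alpha^2 h$. To convert the initial value of this substep to the starting value $Y_{\bar\tau}^0$, I invoke Lemma~\ref{lem:Y-init} applied at the root: $\wh Y_{\bar\tau}(t_0) = \wh Y_{\bar\tau}^0 = Y_{\bar\tau}^0$. Since $\wh Y_{\bar\tau}^1 = \wh Y_{\bar\tau}(t_1)$ by construction in Algorithm~\ref{alg:Psi-tau}, this yields the claimed inequality.

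The asymptotic identity $\alpha = \|\nabla \mathrm{E}(Y_{\bar\tau}^0) \bigtimes_{i=1}^m \wh\U_{\tau_i}^*\| + O(h)$ then follows by the standard continuity argument: the solution $\wh{C}_{\bar\tau}(t)$ of a differential equation with bounded right-hand side satisfies $\wh Y_{\bar\tau}(t_0+\mu h) = Y_{\bar\tau}^0 + O(h)$ uniformly in $\mu \in [0,1]$, and then Lipschitz continuity of $\nabla\mathrm{E}$ together with $\|\wh\U_{\tau_i}^*\|=1$ transfers this to the projected gradient norm. The main conceptual point, and the only nontrivial ingredient beyond the direct calculation, is the invocation of Lemma~\ref{lem:Y-init} to match $\wh Y_{\bar\tau}^0$ with $Y_{\bar\tau}^0$; everything else is the gradient-flow identity and a routine continuity estimate, so I do not anticipate a genuine obstacle in the proof.
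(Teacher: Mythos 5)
Your proposal is correct and follows essentially the same route as the paper's proof: differentiate $\mathrm{E}(\wh Y_{\bar\tau}(t))$ along the Galerkin flow of Algorithm~\ref{alg:Psi-tau}, use orthonormality of the $\wh\U_{\tau_i}$ to move them to the other side of the inner product, substitute the differential equation to obtain $-\|\nabla\mathrm{E}(\wh Y_{\bar\tau}(t))\bigtimes_{i=1}^m \wh\U_{\tau_i}^*\|^2 \le -\alpha^2$, integrate over the step, and invoke Lemma~\ref{lem:Y-init} to identify $\wh Y_{\bar\tau}(t_0)$ with $Y_{\bar\tau}^0$. The only addition is your explicit continuity argument for $\alpha = \|\nabla\mathrm{E}(Y_{\bar\tau}^0)\bigtimes_{i=1}^m\wh\U_{\tau_i}^*\|+O(h)$, which the paper leaves implicit.
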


By the mean value theorem and Theorem \ref{thm:err-trunc},  this implies for the rank-adaptive TTN integrator that
\begin{align*}
	\mathrm{E}(Y_{\bar\tau}^1 ) \leq \mathrm{E}(\wh{Y}_{\bar\tau}^1 ) + \beta c_{\bar\tau} \vartheta \le \mathrm{E}(Y_{\bar\tau}^0)  - \alpha^2 h + \beta c_{\bar\tau} \vartheta
\end{align*}
with $\beta = \text{max}_{0\leq \mu \leq 1} \| \nabla \mathrm{E}( \mu Y_{\bar\tau}^1  + (1-\mu)\wh{Y}_{\bar\tau}^1 ) \|=\| \nabla \mathrm{E}(Y_{\bar\tau}^1) \|+O(\vartheta)$.

\begin{proof} The proof has the same structure as the previous proof.
	In Algorithm~\ref{alg:Psi-tau} we have for $\bar\tau=(\tau_1,\dots,\tau_m)$ and $\wh{Y}_{\bar\tau} (t) = \wh{C}_{\bar\tau}(t) \bigtimes_{i=1}^m \wh{\U}_{\tau_i}$
	\begin{align*}
		\frac{d}{dt} \mathrm{E}(\wh{Y}_{\bar\tau}(t)) &= \langle \nabla \mathrm{E}(\wh{Y}_{\bar\tau}(t)), \dot{\wh{C}}_{\bar\tau}(t) \bigtimes_{i=1}^{m} \wh{\U}_{\tau_i} \rangle \\
		&= \langle \nabla \mathrm{E}(\wh{Y}_{\bar\tau}(t)) \bigtimes_{i=1}^{m} \wh{\U}_{\tau_i}^*, \dot{\wh{C}}_{\bar\tau}(t) \rangle \\
		&= \langle \nabla \mathrm{E}(\wh{Y}_{\bar\tau}(t)) \bigtimes_{i=1}^{m} \wh{\U}_{\tau_i}^*, - \nabla \mathrm{E}(\wh{Y}_{\bar\tau}(t)) \bigtimes_{i=1}^{m} \wh{\U}_{\tau_i}^* \rangle \\
		&= - \| \nabla \mathrm{E}(\wh{Y}_{\bar\tau}(t)) \bigtimes_{i=1}^{m} \wh{\U}_{\tau_i}^* \|^2 \leq -\alpha^2,
	\end{align*}
with $\alpha = \text{min}_{0 \leq \mu \leq 1} \| \nabla \mathrm{E}(\wh{Y}_{\bar\tau}(t_0 + \mu h)) \bigtimes_{i=1}^{m} \wh{\U}_{\tau_i}^* \|$. By Lemma~\ref{lem:Y-init} we have $\wh{Y}_{\bar\tau}^0 = Y_{\bar\tau}^0$ and with $\wh{Y}_{\bar\tau}^1 = \wh{Y}_{\bar\tau}(t_1)$, we obtain
\begin{align*}
	\mathrm{E}(\wh{Y}_{\bar\tau}^1) = \mathrm{E}(\wh{Y}_{\bar\tau}(t_1)) \leq \mathrm{E}(\wh{Y}_{\bar\tau}(t_0)) - \alpha^2 h = \mathrm{E}(Y_{\bar\tau}^0) - \alpha^2 h,
\end{align*}
which is the stated result.
\end{proof}

We note further that for each subtree $\tau\le \bar\tau$, the reduced operator $F_\tau$ of Section~\ref{subsec:subtrees} that corresponds to $F_{\bar\tau}(Y)=-\nabla \mathrm{E}[Y]$, is again a gradient $F_\tau(Y_\tau)=-\nabla \mathrm{E}_\tau[Y_\tau]$ with an energy function $\mathrm{E}_\tau:\mathcal{V}_\tau \to \R$, defined recursively as $\mathrm{E}_{\tau_i}= \mathrm{E}_\tau \circ \pi_{\tau,i}$ for the $i$th subtree $\tau_i$ of $\tau$. Hence, the integrator dissipates the energy on the level of each subtree.

\section{Numerical experiments}   \label{sec:num}
We illustrate the use of the rank-adaptive TTN integrator for a problem in quantum physics.
 \bcl We show the  behaviour of the numerical error of a physical observable of interest (the magnetization in a quantum spin system) and the numerical errors in the conservation of norm and energy,
 and we compare the ranks and the total numbers of evolving parameters in the system as selected by the rank-adaptive algorithm for two different types of trees: binary trees of minimal and maximal height (the latter correspond to tensor trains/matrix product states).\ecl
 
We consider a basic quantum spin system, the Ising model in a transverse field with next neighbor interaction; 
see e.g.~\cite{S73}. 
Consider the discrete Schr\"odin\-ger equation
\begin{align} \label{Ising}
	\mathrm{i}\,\partial_t  \psi  &= H  \psi  \quad\text{with }\ 
%
	H = -\Omega\sum_{k=1}^d \sigma_x^{(k)} - \sum_{k=1}^{d-1} \sigma_z^{(k)}\sigma_z^{(k+1)},
\end{align}
where $\Omega \geq 0$, $\sigma_x={\small\begin{pmatrix} 0 & 1 \\ 1 & 0 \end{pmatrix}}$ and $\sigma_z={\small\begin{pmatrix} 1 & 0 \\ 0 &  -1\end{pmatrix}}$ are the first and third Pauli matrices, respectively, and $\sigma^{(j)} = (\I\otimes \dots  \otimes \I \otimes \sigma \otimes \I \otimes\dots  \otimes \I)$, with $\sigma$ acting on the $j$th particle. 
We take the initial value
$\psi^0  = \bigotimes_{k=1}^{d} (1,0)^\top\in (\C^2)^{d}$.

\subsection{Numerical error behavior for an observable} An observable of interest is the magnetization in $z$-direction, defined by
\begin{align*}
	\langle \psi \vert M \vert \psi \rangle = \frac{1}{d} \sum_{k=1}^d \langle \psi \vert \sigma_z^{(k)} \vert \psi \rangle.
\end{align*}
We solve (\ref{Ising}) and compute the magnetization over time, i.e., $\langle M \rangle (t)= \langle \psi(t) \vert M \vert \psi(t) \rangle$, for $d=10$, $\Omega = 1$, using the rank-adaptive integrator on a binary tree of height 4 with step-size $h=0.01$ and different tolerance parameters $\vartheta$; see Figure~\ref{fig:mag_error}.
We used the classical fourth-order Runge--Kutta method for solving the  low-dimensional differential equations appearing within the subflows $\Phi_\tau^{(i)}$ and $\Psi_\tau$. This gave far better results than using just the explicit Euler method and also somewhat better results than using the second-order Heun method.
\begin{figure}[H]
	\centering
	\includegraphics[trim ={40mm 0mm 0 0mm},clip,scale=0.3]{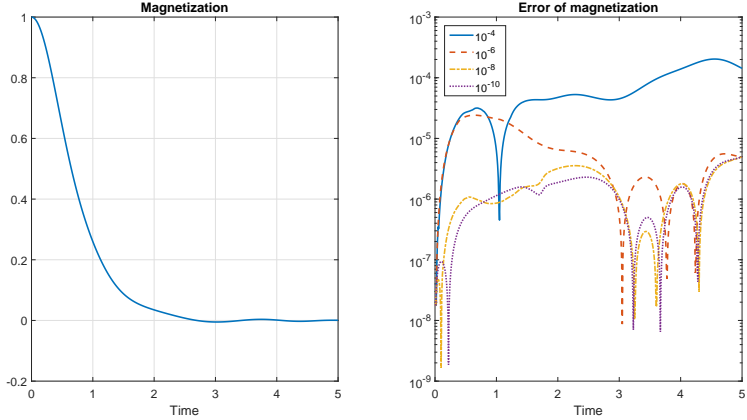}
    \caption{Magnetization $\langle M\rangle (t)$ (left) and its numerical error (right) as functions of time for different tolerance parameters $\vartheta$. }
    \label{fig:mag_error}
\end{figure}
\bcl On the right we see the numerical error of magnetization in a logarithmic scale. The reference solution is obtained by computing $\exp(-ih H)$ via diagonalization of $H$ (which is still feasible for a system of this size). The resulting matrix gives us the numerically exact time propagation via $X_{k+1} =  \exp(-ih H) X_k$. \ecl Note that decreasing $\vartheta$ further does not lead to better results, as we still have a time discretization error, which is of the order $\mathcal{O}(h)$ by the bound of Section~\ref{subsec:error-bound}. The pure time discretization error was actually observed to behave as $\mathcal{O}(h^2)$ in numerical tests but this is beyond our current theoretical understanding. 

\subsection{Near-conservation of norm and energy}
Further we illustrate the conservation properties of the rank-adaptive integrator. 
We again consider the differential equation (\ref{Ising}) with $d=10$, $\Omega = 1$, using the time step-size $h=0.01$ and the tolerance parameter $\vartheta =10^{-8}$.
\begin{figure}[H]
	\centering
	\includegraphics[trim ={40mm 0mm 0 0mm},clip,scale=0.35]{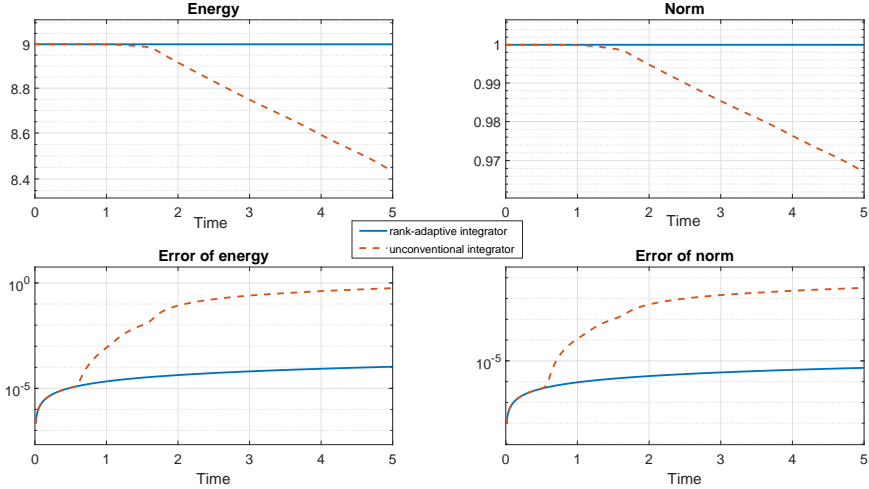}
    \caption{Conservation of energy and norm up to final time $T=5$ for the low-rank approximations generated by the rank-adaptive TTN integrator (solid line) and the fixed-rank TTN integrator (dashed line) from Section \ref{subsec:alt-ttn-int}.}
    \label{fig:conservation}
\end{figure}
In Figure \ref{fig:conservation} it is seen that the proposed rank-adaptive integrator conserves the energy and the norm very well (up to the order of the tolerance parameter $\vartheta$ according to our theory). Similar results are obtained also when choosing larger~$\vartheta$. In contrast, the TTN integrator based on the `unconventional' integrator of ~\cite{CL21} (see Section~\ref{subsec:alt-ttn-int})
has a significant drift in norm and energy.  

\subsection{Comparison of different tree structures}
Each tensor can be approximated by TTNs on different trees. We compare the behavior of the rank-adaptive integrator applied to TTNs on binary trees of minimal and maximal height, see Figure \ref{fig:binary_tree}. TTNs on binary trees of maximal height are matrix product states/tensor trains, which have become a standard computational tool in quantum physics; see e.g.\;\cite{Vi03,Scho11,Pa-etal19,CiPSV21} and references therein. \bcl For the same number $d$ of leaves, both trees have $2d-1$ vertices.\ecl
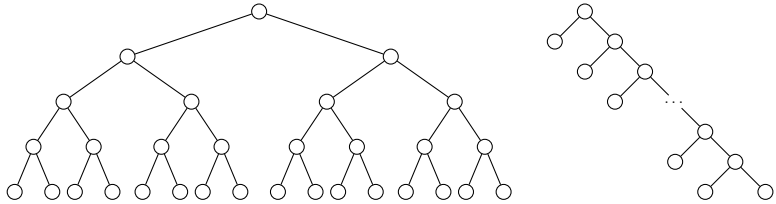
\begin{figure}[H]
	\begin{center}
	\begin{tikzpicture}[every node/.style={sibling distance=10mm},level 1/.style={sibling distance=35mm},level 2/.style={sibling distance=17mm},level 3/.style={sibling distance=8mm},level 4/.style={sibling distance=5mm},level distance = 6mm]
		\node[circle,draw,scale=0.6]  { }
		child { node[circle,draw,scale=0.6] {} 
			child{ node[circle,draw,scale=0.6] {} 
				child{ node[circle,draw,scale=0.6] {}
					child{ node[circle,draw,scale=0.6]{} }
					child{ node[circle,draw,scale=0.6]{} }}
				child{ node[circle,draw,scale=0.6] {}
					child{ node[circle,draw,scale=0.6]{} }
					child{ node[circle,draw,scale=0.6]{} }}	
					}
			child{ node[circle,draw,scale=0.6] {} 
				child{ node[circle,draw,scale=0.6] {}
					child{ node[circle,draw,scale=0.6]{} }
					child{ node[circle,draw,scale=0.6]{} }}
				child{ node[circle,draw,scale=0.6] {}
					child{ node[circle,draw,scale=0.6]{} }
					child{ node[circle,draw,scale=0.6]{} }}} }
		child { node[circle,draw,scale=0.6] {} 
			child{ node[circle,draw,scale=0.6] {} 
				child{ node[circle,draw,scale=0.6] {}
					child{ node[circle,draw,scale=0.6]{} }
					child{ node[circle,draw,scale=0.6]{} }}
				child{ node[circle,draw,scale=0.6] {}
					child{ node[circle,draw,scale=0.6]{} }
					child{ node[circle,draw,scale=0.6]{} }}	
					}
			child{ node[circle,draw,scale=0.6] {} 
				child{ node[circle,draw,scale=0.6] {}
					child{ node[circle,draw,scale=0.6]{} }
					child{ node[circle,draw,scale=0.6]{} }}
				child{ node[circle,draw,scale=0.6] {}
					child{ node[circle,draw,scale=0.6]{} }
					child{ node[circle,draw,scale=0.6]{} }}}};
	\end{tikzpicture}
	\quad
	\begin{tikzpicture}[every node/.style={sibling distance=2mm},,level 1/.style={sibling distance=8mm},level 2/.style={sibling distance=8mm},level 3/.style={sibling distance=8mm},level 4/.style={sibling distance=8mm},level 5/.style={sibling distance=8mm},level 6/.style={sibling distance=8mm},level distance = 4mm]
		\node[circle,draw,scale=0.6]  { }
			child{ node[circle,draw,scale=0.6]{} }
			child{ node[circle,draw,scale=0.6]{}{
				child{ node[circle,draw,scale=0.6]{} }
				child{ node[circle,draw,scale=0.6]{} {
					child{ node[circle,draw,scale=0.6]{} }
					child{ node[,scale=0.6]{$\dots$} {
						child[white,scale=0.6]{}
						child{ node[circle,draw,scale=0.6]{} {
							child{ node[circle,draw,scale=0.6]{} }
							child{ node[circle,draw,scale=0.6]{}{
								child{ node[circle,draw,scale=0.6]{} }
								child{ node[circle,draw,scale=0.6]{} }
			}}}}}}}}}};
	\end{tikzpicture}
	
	  \caption{Left: Binary tree of minimal height. Right: Binary tree of maximal height.}
	  \label{fig:binary_tree}
	\end{center}	
\end{figure} 
For the comparison we solve equation (\ref{Ising}) for $d=16$, $\Omega =1$, 
with time step-size $h=0.01$ and two tolerance parameters $\vartheta=10^{-5}$ and $\vartheta=10^{-8}$, for the binary trees of minimal and maximal height. The ranks were limited not to exceed 200. \bcl After each time step we determine 
\begin{itemize}
\item
the maximal rank $r=\max_{\tau\le\bar\tau} r_\tau$ on the tree and 
\item the total number of entries in the basis matrices and connection tensors, which is essentially (ignoring the orthonormality constraints) the number of independent evolving parameters used to describe the system. 
\end{itemize}
Aside from the tree topology, the rank-adaptive integration algorithm used was identical for the TTNs on both trees.
\ecl
\begin{figure}[H]
	\centering
	\includegraphics[trim ={30mm 0mm 0 0mm},clip,scale=0.3]{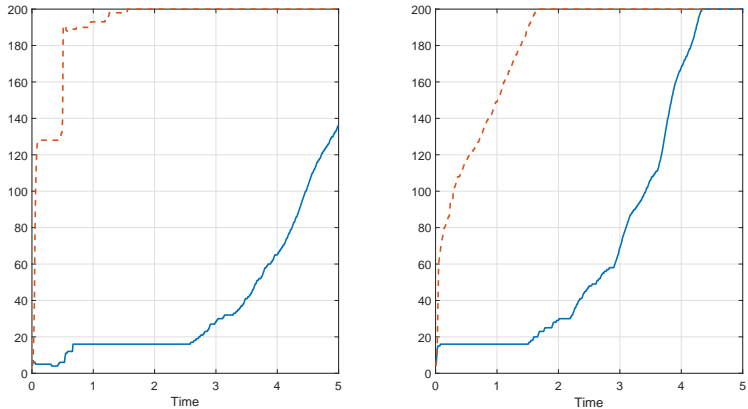}
    \caption{The figure shows the maximal ranks on the tree as selected by the rank-adaptive integrator, plotted versus time. The solid line is for the binary tree of minimal height, and the dashed line is for the binary tree of maximal height (MPS), each tree with $d=16$ leaves and $31$ vertices. Left: Maximal ranks for $\vartheta = 10^{-5}$. Right: Maximal ranks for $\vartheta = 10^{-8}$.}
    \label{fig:rank_compare}
\end{figure}
In Figure \ref{fig:rank_compare} the maximal rank is plotted after each time step. We find that the TTN on the binary tree of minimal height can approximate the solution longer with relatively small ranks compared to the TTN on the binary tree of maximal height (matrix product state -- MPS). This holds independently of the used tolerance parameter $\vartheta$.
While we observe that increasing $\vartheta$ reduces the maximal rank for the TTN on the binary tree of minimal height, this is not the case for the MPS. Although the Ising model \eqref{Ising} only has nearest-neighbor interactions that the MPS represents well, the MPS appears to need higher ranks to capture long-range effects in this model, 
compared with the TTN on the binary tree of minimal height.\footnote{\bcl As one referee commented, the observed behavior of the ranks for the two different trees corresponds to theoretical worst-case estimates for the ranks in conversions from one tensor format to the other as given in \cite[Section 12.4]{H19}.\ecl}

\bcl
A similar situation also arises when we compare the total numbers of entries in the basis matrices and connection tensors for the two trees. These numbers are plotted versus time in Figure~\ref{fig:entries_compare}. The TTN on the binary tree of minimal height needs to store (and compute with) far fewer data than the TTN on the binary tree of maximal height (MPS).
\ecl
\begin{figure}[H]
	\centering
	\includegraphics[trim ={30mm 0mm 0 0mm},clip,scale=0.3]{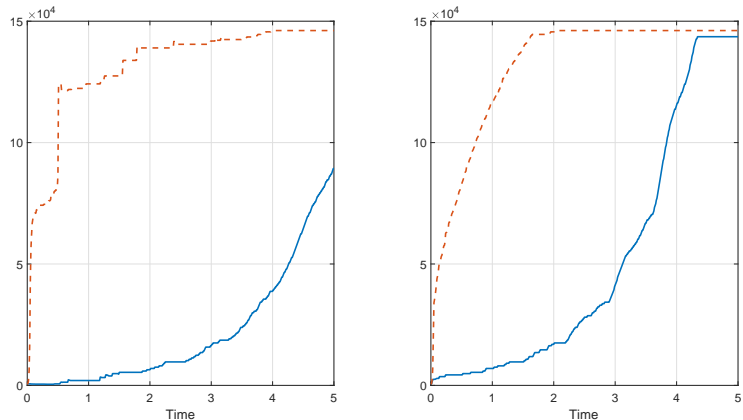}
    \caption{The figure shows the total numbers of entries in the basis matrices and connection tensors on the tree, plotted versus time. The solid line is for the binary tree of minimal height, and the dashed line is for the binary tree of maximal height (MPS), each tree with $d=16$ leaves and $31$ vertices. The near-horizontal lines on top of the figure appear because the maximal rank was limited to 200. Left: Total numbers of entries for $\vartheta = 10^{-5}$. Right: Total numbers of entries for $\vartheta = 10^{-8}$.}
    \label{fig:entries_compare}
\end{figure}


\section*{Acknowledgments} 
 We thank Federico Carollo and Igor Lesanovsky for helpful discussions on quantum spin systems and for suggesting the Ising model as a first test example. We are grateful to Federico Carollo for providing a reference solution for this model. \bcl We thank two anonymous referees for their helpful comments on a previous version. \ecl

\bcl 
This work was supported by the Deutsche Forschungsgemeinschaft (DFG, German Research Foundation) via CRC TRR 352 "Mathematics of many-body quantum systems and their collective phenomena" and FOR 5413 ``Long-range interacting quantum spin systems out of equilibrium: Experiment, theory and mathematics". The work of Gianluca Ceruti was supported by the Swiss National Science Foundation (SNSF) research project ``Fast algorithms from low-rank updates", grant number: 200020-178806. 
\ecl

\bibliography{references}{}


\appendix\section{\bf Rank truncation on general tree tensor networks}

\bcl
For the derivation and analysis of the rank truncation algorithm for general (not necessarily binary) complex TTNs stated in Algorithm 7 in Section~\ref{subsec:truncation},
we give a formulation in which the given TTN is first rotated (more precisely, the connection tensors are transformed using the unitary matrices that are built from the left singular vectors) and then cut (more precisely, entries of rotated connection tensors multiplying small singular values are set to zero). 

For binary TTNs, a formulation and error analysis of an HOSVD-based rank truncation algorithm was previously given 
by Hackbusch~\cite[Sections 11.3.3 and 11.4.2]{H19}. In the binary case, the truncation algorithm given here (Algorithm 7) is similar in that it computes reduced SVDs of matricizations of small tensors that have the size of the connection tensors
but it is not identical to the algorithm in~\cite{H19}. Algorithm 7 appears simpler, but our error analysis for it yields a dependence on the dimension $d$ that is linear as opposed to the square root behaviour of the truncation algorithm for binary trees in \cite{H19}. 

This appendix uses the notation of Section 2 and refers to Algorithm 7 in Section~\ref{subsec:truncation}, but is otherwise independent of the rest of the paper. The error bound given in Theorem A.1 below is used in Section~\ref{sec:cons}.

\subsection*{Derivation of a TTN rank-truncation algorithm}
We start from a rank-aug\-mented  TTN  $\wh X_{\bar\tau}$  in orthonormal representation such that for each subtree $\tau =(\tau_1,\dots\tau_m)$ of $\bar \tau$, the augmented connection tensors $\wh C_\tau \in \C^{\wh r_\tau\times \wh r_{\tau_1}\times \dots\times \wh r_{\tau_m}}$ and the basis matrices are related by
$$
\wh X_\tau = \wh C_\tau \times_0 \I_{\tau} \bigtimes_{i=1}^m \wh \U_{\tau_i}  \quad\text{ with}\quad
\wh \U_{\tau_i} = \mat_0(\wh X_{\tau_i})^\top.
$$
For $i=1,\dots,m$, we consider the reduced SVD of the $i$th matricization of $\wh C_\tau$:
$$
\mat_i(\wh C_\tau)= \wh \bfP_{\tau_i} \wh {\bf\Sigma}_{\tau_i} \wh \Q_{\tau_i}^*
$$
with the unitary matrix $\wh \bfP_{\tau_i} \in \C^{\wh r_{\tau_i}\times \wh r_{\tau_i}}$ of the left singular vectors, the diagonal matrix of singular values $\wh {\bf\Sigma}_{\tau_i}$ of the same dimension with decreasing nonnegative real diagonal entries, and further the matrix $\wh\Q_{\tau_i}$  with orthonormal columns built of the first $\wh r_\tau$ right singular vectors, which will not enter the algorithm. We set $\wh \bfP_{\bar\tau}=\I_{r_{\bar\tau}}=1$ to start the recursion, which goes from the root to the leaves.
\ecl

\medskip\noindent
(i) {\it Rotate.} We define
$$
\wh C_\tau^{\rm rot} = \wh C_\tau \times_0 \wh\bfP_{\tau}^\top \bigtimes_{i=1}^m \wh\bfP_{\tau_i}^*\quad \text{ and }\quad \wh \U_{\tau_i}^{\rm rot}  = \wh \U_{\tau_i} \wh \bfP_{\tau_i} \quad (i=1,\dots,m),
$$
for which we observe
$$
\mat_i(\wh C_\tau^{\rm rot})
= \wh \bfP_{\tau_i}^* \mat_i(\wh C_\tau) \bigotimes_{j\ne i} \bigl(\wh \bfP_{\tau_j}^*\bigr)^\top \otimes \wh \bfP_{\tau}=
\wh {\bf\Sigma}_{\tau_i} \wh \Q_{\tau_i}^* \bigotimes_{j\ne i} \bigl(\wh \bfP_{\tau_j}^*\bigr)^\top \otimes \wh \bfP_{\tau}
$$

\vspace{-5mm}
\noindent
and
$$
\wh \U_{\tau_i}^{\rm rot} = \wh \U_{\tau_i} \wh \bfP_{\tau_i} = \mat_0(\wh X_{\tau_i})^\top \wh \bfP_{\tau_i} =
\bigl( \wh \bfP_{\tau_i}^\top \mat_0(\wh X_{\tau_i}) \bigr)^\top =
\mat_0\bigl(\wh X_{\tau_i}\times_0  \wh \bfP_{\tau_i}^\top\bigr)^\top.
$$
We set 
$$
\wh X_\tau^{\rm rot} =  \wh X_\tau \times_0 \wh\bfP_{\tau}^\top = \wh C_\tau^{\rm rot} \times_0 \I_{\wh r_\tau} \bigtimes_{i=1}^m \wh \U_{\tau_i}^{\rm rot}.
$$
So we have
$$
\wh X_\tau^{\rm rot} = \wh C_\tau^{\rm rot} \times_0 \I_{\wh r_\tau} \bigtimes_{i=1}^m \wh \U_{\tau_i}^{\rm rot}
\quad\text{ with }\quad
\wh \U_{\tau_i}^{\rm rot} =\mat_0\bigl(\wh X_{\tau_i}^{\rm rot} \bigr)^\top
$$
and for $i=1,\dots,m$,  
\begin{equation}\label{mat-rot}
\mat_i(\wh C_\tau^{\rm rot}) = \wh {\bf\Sigma}_{\tau_i} \wh \V_{\tau_i}^*
\end{equation}
with a matrix $\wh \V_{\tau_i} $ having orthonormal columns. Moreover, we have $\wh X_{\bar\tau}^{\rm rot} = \wh X_{\bar\tau}$.

\medskip\noindent
(ii) {\it Cut.}
The reduced rank $r_{\tau_i} \le \wh r_{\tau_i}$ is chosen 
as the smallest integer such that  the singular values $\sigma_k$ in $\wh{\bf\Sigma}_{\tau_i}$ satisfy for a given tolerance $\vartheta>0$
\begin{equation}\label{vartheta}	
\left( \sum\limits_{k=r_{\tau_{i}}+1}^{\wh r_{\tau_i}} \sigma_k^2 \right)^{1/2} \leq \vartheta.
\end{equation}
Let ${\bf\Sigma}_{\tau_i} $ be the truncated diagonal matrix with the largest $r_{\tau_i}$ diagonal entries of $\wh {\bf\Sigma}_{\tau_i}$. For the truncated tensor $\wh C_\tau^{\rm cut}$ of the same dimension as $\wh C_\tau^{\rm rot}$ defined by
$$
\wh C_\tau^{\rm cut} \big\vert_{(k_0,k_1,\dots,k_m)} = 
\begin{cases} \wh C_\tau^{\rm rot} \big\vert_{(k_0,k_1,\dots,k_m)} & \text{if $k_0\le r_\tau$, $k_i \le r_{\tau_i}$ $(i=1,\dots,m)$}
\\ 0 & \text{else,}
\end{cases}
$$
we then have
\begin{equation}\label{mat-cut}	
\mat_i(\wh C_\tau^{\rm cut})= 
\begin{pmatrix} {\bf\Sigma}_{\tau_i} &0 \\ 0 & 0 \end{pmatrix} \wh \V_{\tau_i}^*,
\end{equation}
where the $\wh r_{\tau_i} - r_{\tau_i}$ smallest singular values of $\wh{\bf\Sigma}_{\tau_i}$ have been cut to $0$ (but nothing else is changed).
We approximate $\wh X_\tau^{\rm rot}$ by
$$
\wh X_\tau^{\rm cut} = \wh C_\tau^{\rm cut} \times_0 \I_{r_\tau} \bigtimes_{i=1}^m \wh \U_{\tau_i}^{\rm rot}.
$$
Since many entries of $\wh C_\tau^{\rm cut} $ are zero, this expression can be simplified.
We define the dimension-reduced tensor $C_\tau \in \C^{r_\tau\times r_{\tau_1}\times \dots \times r_{\tau_m}}$ as the essential part of $\wh C_\tau^{\rm cut}$, 
$$
C_\tau \big\vert_{(k_0,k_1,\dots,k_m)} = \wh C_\tau^{\rm cut} \big\vert_{(k_0,k_1,\dots,k_m)} \quad\ \text{for $k_0\le r_\tau$, $k_i \le r_{\tau_i}$ $(i=1,\dots,m)$}.
$$
Let $\bfP_{\tau} \in \C^{\wh r_{\tau}\times r_{\tau}}$ be the matrix built of the first $r_{\tau}$ columns of $\wh \bfP_{\tau}$. We then have
$$
C_\tau = \wh C_\tau \times_0 \bfP_{\tau}^\top \bigtimes_{i=1}^m \bfP_{\tau_i}^*.
$$
Let $\wt\U_{\tau_i} $ be the matrix built of the first $r_{\tau_i}$ columns of $\wh \U_{\tau_i}^{\rm rot}$, i.e.,  $\wt\U_{\tau_i}=\wh \U_{\tau_i} \bfP_{\tau_i}$. We obtain the reduced representation
$$
\wh X_{\tau}^{\rm cut}  \times_0 (\I_{r_\tau}, 0) = C_\tau \times_0 \I_{\tau} \bigtimes_{i=1}^m \wt \U_{\tau_i}.
$$
We note that
$\wt\U_{\tau_i}=\mat_0(\wt X_{\tau_i})^\top$ with
 $\wt X_{\tau_i}=\wh X_{\tau_i}\times_0 \bfP_{\tau_i}^\top=\wh X_{\tau_i}^{\rm rot}\times_0 (\I_{r_{\tau_i}},0)$, which differs from $\wh X_{\tau_i}^{\rm rot}$ with $\wh\U_{\tau_i}^{\rm rot}=\mat_0(\wh X_{\tau_i}^{\rm rot})^\top$ only in that the connection tensor
$\wh C_{\tau_i}^{\rm rot}$ is replaced by the smaller tensor $\wt C_{\tau_{i}} =\wh C_{\tau_i}^{\rm rot} \times_0 (\I_{r_{\tau_i}},0)$.

(iii) {\it Recursion.} The rotate-and-cut procedure is done recursively from the root to the leaves, where finally we set $\U_l=\wt \U_l=\wh \U_l \bfP_l$. In this way we obtain the rank-truncated TTN $X_{\bar\tau}$ with the reduced connection tensor $C_\tau$ for each subtree $\tau=(\tau_1,\dots,\tau_m)$ of $\bar\tau$ and the reduced basis matrix $\U_l$ for each leaf $l$:
$$
X_\tau = C_\tau \times_0 \I_{\tau} \bigtimes_{i=1}^m \U_{\tau_i}
\quad\text{ with}\quad
\U_{\tau_i} = \mat_0(X_{\tau_i})^\top.
$$
We note that this representation of $X_\tau$ is in general not orthonormal, but as we show below, the norms of the factors behave in a stable way. If orthonormality is needed for output, the TTN can be reorthonormalized as described after Lemma~\ref{lem:orth}. Orthonormality is, however, not needed for advancing with the next time step, as the integrator anyway computes new orthonormal bases in Algorithm~\ref{alg:Phi-tau-i}.

With the above considerations we arrive at Algorithm~\ref{alg:truncation-tau}, which arranges the computations in a different but mathematically equivalent way. 

\subsection*{Error analysis of the rank-truncation algorithm}
We are given the TTN $\wh X_{\bar\tau}$ in orthonormal representation, which is built up recursively,
for each subtree $\tau=(\tau_1,\dots,\tau_m)\le \bar\tau$, from the orthonormal sub-TTN
$$
\wh X_\tau = \wh C_\tau \times_0 \I_{\wh r_\tau} \bigtimes_{i=1}^m \wh \U_{\tau_i}
\quad\text{ with } \quad \wh \U_{\tau_i} = \mat_0(\wh X_{\tau_i})^\top,
$$
where $\wh C_\tau  \in \C^{\wh r_\tau\times \wh r_{\tau_1}\times \dots\times \wh r_{\tau_m}}$.
For $\tau\ne\bar\tau$ the columns of
$\mat_0(\wh C_\tau)$ and $\wh \U_{\tau}$ 
are orthonormal,
so that in particular their matrix 2-norms are equal to 1:
\begin{equation} \label{hat-CU-norm}
\| \mat_0(\wh C_\tau) \|_2 = 1, \quad \| \wh \U_{\tau} \|_2 = 1
\qquad\text{ for }\ \tau < \bar\tau.
\end{equation}
Algorithm~\ref{alg:truncation-tau} computes a rank-truncated TTN $X_{\tau}$ 
for $\tau\le\bar\tau$,
$$
 X_\tau = C_\tau \times_0 \I_{r_\tau} \bigtimes_{i=1}^m \U_{\tau_i} 
\quad\text{ with } \quad  \U_{\tau_i}  = \mat_0(X_{\tau_i})^\top,
$$
where $C_\tau \in \C^{r_\tau\times r_{\tau_1}\times \dots \times r_{\tau_m}}$. The reduced ranks $r_\tau \le \wh r_\tau$ depend on the given tolerance parameter $\vartheta$ as used in \eqref{vartheta}.	

\bcl
In the following error bound, the norm $\| X \|$ of a tensor $X$ is the Euclidean norm of the vector of entries of $X$, and the integer $d_\tau$ is the number of vertices of $\tau$.

\begin{theorem}[Rank truncation error]
\label{thm:err-trunc}
The error of the tree tensor network~$X_{\bar\tau}$, which results from rank truncation of $\wh X_{\bar\tau}$ with tolerance $\vartheta$ according to Algorithm~7, is bounded by 
$$
\| X_{\bar\tau}-\wh X_{\bar\tau} \| \le c_{\bar\tau}\, \vartheta
\ \ \text{ with }\ 
c_{\bar\tau} =\| C_{\bar\tau} \| (d_{\bar\tau}-1) +1.
$$
\end{theorem}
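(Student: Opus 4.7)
The plan is to prove the bound by induction on the height of the subtree~$\tau$, establishing the analogous inductive statement $\|X_\tau - \wt X_\tau\| \le c_\tau\vartheta$ for every recursive invocation of the algorithm on an orthonormal sub-TTN $\wt X_\tau$. The base case is an internal node whose children are all leaves, where the bound follows directly from the rotate-and-cut description of Algorithm~\ref{alg:truncation-tau}; the inductive step combines a local cut contribution with a propagated recursion contribution.

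At each internal node $\tau = (\tau_1, \dots, \tau_m)$ I would introduce the intermediate tensor $W_\tau = \wt C_\tau \bigtimes_{i=1}^m (\wh\U_{\tau_i}\bfP_{\tau_i}\bfP_{\tau_i}^*)$, obtained by performing only the local rotate-and-cut at $\tau$ without descending into the children, and write $\|X_\tau - \wt X_\tau\| \le \|X_\tau - W_\tau\| + \|W_\tau - \wt X_\tau\|$. Because the matrices $\wh\U_{\tau_i}$ have orthonormal columns, the lifting is an isometry and the second term equals $\|\wt C_\tau - \wt C_\tau \bigtimes_i \bfP_{\tau_i}\bfP_{\tau_i}^*\|_F$; by the tolerance condition \eqref{vartheta} and successive single-mode truncations of the connection tensor, this is bounded by $m_\tau \vartheta$.

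For the first term I would telescope across the internal children only (leaf children satisfy $\U_l\bfP_l^* = \wh\U_l\bfP_l\bfP_l^*$ and contribute nothing to this difference). Using $\U_{\tau_i} - \wt\U_{\tau_i} = \mat_0(X_{\tau_i} - \wt X_{\tau_i})^\top$ and bounding each telescoped term via its $i$th matricization, the Frobenius norm of the term indexed by $i$ is at most $\|X_{\tau_i}-\wt X_{\tau_i}\|\cdot\|\mat_i(\wt C_\tau)\|_2\cdot\prod_{j\ne i}\|\cdot\|_2$. The inductive hypothesis applied to the orthonormal sub-TTN $\wt X_{\tau_i}$ (which is orthonormal because $\wt\U_{\tau_i}=\wh\U_{\tau_i}\bfP_{\tau_i}$ has orthonormal columns) bounds the first factor by $c_{\tau_i}\vartheta$, while the remaining factors are controlled by $\|\wt C_\tau\|_F$ and $1+\mathcal{O}(\vartheta)$ respectively. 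Combining these contributions and using $\sum_{i\in I_\tau}(d_{\tau_i}-1) + m_\tau = d_\tau-1$ (with $I_\tau$ the set of internal children) yields, after unraveling the recurrence up to the root, the claimed form $c_{\bar\tau} = \|C_{\bar\tau}\|(d_{\bar\tau}-1)+1$.

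The main obstacle will be a careful bookkeeping of the spectral norms of the not-necessarily-orthonormal truncated factors $\U_{\tau_i}\bfP_{\tau_i}^*$ appearing in the telescoping: one must choose the telescoping order and exploit $\|\U_{\tau_i}\|_2 \le \|\wt\U_{\tau_i}\|_2 + \mathcal{O}(\vartheta) = 1 + \mathcal{O}(\vartheta)$ so that the cumulative prefactor grows linearly in $d_{\bar\tau}$ rather than as a product of factors over the children at each level. A secondary subtlety is the consistent treatment of transposes versus conjugate transposes of the $\bfP_{\tau_i}$ matrices for complex-valued tensors, which needs to be reconciled with the matricization formula \eqref{eq:mat_tucker} throughout the telescoping argument.
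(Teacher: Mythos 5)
Your decomposition at each node --- same connection tensor with perturbed factors (telescoping) plus a purely local cut --- is exactly the splitting the paper uses, namely $X_\tau-\wt X_\tau=\bigl(X_\tau-\wt X_\tau^{\rm cut}\bigr)+\bigl(\wh X_\tau^{\rm cut}-\wh X_\tau^{\rm rot}\bigr)\times_0(\I_{r_\tau},0)$, and your vertex count $\sum_{i\in I_\tau}(d_{\tau_i}-1)+m_\tau=d_\tau-1$ is correct. The genuine gap is in the norm bookkeeping of the telescoping step. You bound the $i$th telescoped term by $\|X_{\tau_i}-\wt X_{\tau_i}\|\cdot\|\mat_i(\wt C_\tau)\|_2\cdot\prod_{j\ne i}\|\cdot\|_2$ and say the middle factor is ``controlled by $\|\wt C_\tau\|_F$''. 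But for an orthonormal connection tensor one has $\|\wh C_\tau\|_F=\sqrt{\wh r_\tau}$, and since $\mat_i(\wh C_\tau)$ has only $\wh r_{\tau_i}$ rows, its largest singular value satisfies $\|\mat_i(\wh C_\tau)\|_2\ge \sqrt{\wh r_\tau/\wh r_{\tau_i}}$, which genuinely exceeds $1$ whenever $\wh r_\tau>\wh r_{\tau_i}$; only the mode-$0$ matricization has spectral norm equal to $1$. These rank-dependent factors multiply over the levels of the tree, yet no such factors appear in the claimed constant $c_{\bar\tau}=\|C_{\bar\tau}\|(d_{\bar\tau}-1)+1$, which carries the norm of a connection tensor only at the root. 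So the recurrence as you set it up does not close. Moving the telescoping to the mode-$0$ matricization does not rescue a Frobenius-norm induction either, because the Frobenius norm of a Kronecker product is the product of Frobenius norms, which brings in $\prod_{j\ne i}\sqrt{r_{\tau_j}}$.

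The paper resolves this by running the entire induction in the \emph{spectral} norm: it proves $\|\U_\tau-\wt\U_\tau\|_2\le d_\tau\,\vartheta$ for $\tau<\bar\tau$ by telescoping $\mat_0\bigl(X_\tau-\wt X_\tau^{\rm cut}\bigr)=\mat_0(C_\tau)\bigl(\bigotimes_i\U_{\tau_i}^\top-\bigotimes_i\wt\U_{\tau_i}^\top\bigr)$, where \emph{all} factors have spectral norm bounded by $1$ \emph{exactly}: $\|\mat_0(C_\tau)\|_2\le\|\mat_0(\wh C_\tau)\|_2=1$ because cutting entries cannot increase the spectral norm, and $\|\U_\tau\|_2\le\|\mat_0(C_\tau)\|_2\prod_i\|\U_{\tau_i}\|_2\le1$ propagates down the tree. (This also disposes of the $\|\U_{\tau_i}\|_2\le1+\mathcal{O}(\vartheta)$ bookkeeping you flag as your main obstacle; the bound is $\le1$ with no correction term.) The Frobenius norm, and with it the factor $\|C_{\bar\tau}\|$, enters only at the root, where $r_{\bar\tau}=1$ makes $\mat_0(\cdot)$ a row vector so that spectral and Frobenius norms coincide. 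Your local-cut estimate $m_\tau\vartheta$ via successive single-mode truncations is sound (the paper charges only $\vartheta$ per vertex for this step, via its identity \eqref{mat-cut}), but to reach a constant of the claimed form you must replace the Frobenius-norm induction on $\|X_\tau-\wt X_\tau\|$ by the spectral-norm induction on $\|\U_\tau-\wt\U_\tau\|_2$.
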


{\sc Remark.} {\it If the tolerance in the truncation at the root, from $\wh C_{\bar\tau}$ to $ C_{\bar\tau}$, is modified to 
$\vartheta / \| \wh C_{\bar\tau} \|$ but is left at $\vartheta$ at the other vertices, then the proof yields the error bound}
$$
\| X_{\bar\tau}-\wh X_{\bar\tau} \| \le d_{\bar\tau}\, \vartheta.
$$
\ecl

\begin{proof} 
From the sub-TTNs $\wh X_{\tau}$ with $\tau\le \bar\tau$ we construct the rotated and cut TTNs $\wh X_{\tau}^{\rm rot}$, $\wh X_{\tau}^{\rm cut}$ as described above. For a tree $\tau=(\tau_1,\dots,\tau_m)\le\bar\tau$ we then have the corresponding matrices
\begin{align*}
\wh \U_\tau^{\rm rot} &= \mat_0(\wh X_{\tau}^{\rm rot})^\top, \qquad 
\wh X_{\tau}^{\rm rot} = \wh C_{\tau}^{\rm rot} \times_0 \I_{\wh r_\tau} \bigtimes_{i=1}^m \wh \U_{\tau_i}^{\rm rot},
\\
 \U_\tau &= \mat_0(X_{\tau})^\top, \qquad  \quad\
X_\tau = C_{\tau}  \times_0 \I_{r_\tau} \bigtimes_{i=1}^m \U_{\tau_i}.
\end{align*}
With
$$
\wt \U_\tau = \wh \U_\tau^{\rm rot}\begin{pmatrix} \I_{r_\tau} \\ 0 \end{pmatrix} =  \mat_0(\wt X_{\tau})^\top,
\qquad \wt X_{\tau}= \wh X_{\tau}^{\rm rot} \times_0 (\I_{r_\tau}, 0),
$$ 
we further have the intermediate tensor
\begin{align*}
\wt X_{\tau}^{\rm cut} =
\wh X_{\tau}^{\rm cut}  \times_0 (\I_{r_\tau}, 0) &= \,\wh C_{\tau}^{\rm cut} \times_0 (\I_{r_\tau}, 0) \bigtimes_{i=1}^m \wh \U_{\tau_i}^{\rm rot}
\\
&= \ \ \; C_{\tau}  \times_0 \I_{r_\tau} \bigtimes_{i=1}^m \wt\U_{\tau_i}.
\end{align*}
We prove the error bound 
\begin{equation}\label{U-err}
\| \U_\tau-\wt \U_\tau \|_2\le d_\tau \,\vartheta\quad\text{ for }\ \tau<\bar\tau
\end{equation}
using induction over the height of the tree. At leaves~$l$ we have $\U_l=\wt \U_l$ and $\| \wt \U_l\|_2 \le 1$ by construction. We make the induction hypothesis 
$$
\text{$\| \U_{\tau_i} \|_2 \le 1$ and $\| \U_{\tau_i} - \wt \U_{\tau_i} \|_2 \le d_{\tau_i} \, \vartheta$.}
$$
We note that $\|  \wt \U_{\tau_i} \|_2 \le 1$ by \eqref{hat-CU-norm} and by the construction of  $\wt \U_{\tau_i}$ from $\wh \U_{\tau_i}$.
We further observe that
\begin{align*}
&\| \mat_0(C_\tau) \|_2 = \| \mat_0(\wh C_{\tau}^{\rm cut}) \|_2 \le \| \mat_0(\wh C_{\tau}^{\rm rot}) \|_2 = \| \mat_0(\wh C_{\tau}) \|_2
\quad\text{ and }
\\
&\| \U_\tau \|_2 = \| \mat_0(X_\tau) \|_2 = \|  \mat_0(C_\tau)  \bigotimes_{i=1}^m \U_{\tau_i}^\top \|_2 \le \| \mat_0(C_\tau) \|_2
\prod_{i=1}^m \| \U_{\tau_i} \|_2.
\end{align*}
By \eqref{hat-CU-norm} and the induction hypothesis, we thus obtain
\begin{equation}\label{CU-norm}
\| \mat_0(C_\tau) \|_2 \le 1 \quad \text{ and }\quad \| \U_\tau \|_2 \le 1.
\end{equation}
We have $\| \U_\tau-\wt \U_\tau \|_2 = \| \mat_0 ( X_\tau - \wt X_\tau ) \|_2$ and write
$$
 X_\tau - \wt X_\tau = \bigl( X_\tau - \wt X_{\tau}^{\rm cut}  \bigr) +
\bigl( \wh X_{\tau}^{\rm cut} - \wh X_{\tau}^{\rm rot} \bigr) \times_0 (\I_{r_\tau}, 0).
$$
The first term on the right-hand side equals
$$
X_\tau - \wt X_{\tau}^{\rm cut}  = C_{\tau}  \times_0 \I_{r_\tau} \bigtimes_{i=1}^m \U_{\tau_i}-C_{\tau}  \times_0 \I_{r_\tau} \bigtimes_{i=1}^m \wt \U_{\tau_i}
$$
with the same connection tensor $C_\tau$ in both terms of the difference. We then have
$$
\mat_0\bigl( X_\tau - \wt X_{\tau}^{\rm cut} \bigr) = \mat_0(C_\tau) \Bigl( \bigotimes_{i=1}^m \U_{\tau_i}^\top - \bigotimes_{i=1}^m \wt \U_{\tau_i}^\top \Bigr).
$$
Writing the difference of the Kronecker products as a telescoping sum, using that  $\mat_0(C_\tau)$ and $\U_{\tau_i}$ are bounded by $1$ in the matrix 2-norm, and finally using the induction hypothesis, we obtain
$$
\| \mat_0\bigl( X_\tau - \wt X_{\tau}^{\rm cut} \bigr) \|_2 \le \sum_{i=1}^m \| \U_{\tau_i} - \wt \U_{\tau_i} \|_2
\le \sum_{i=1}^m d_{\tau_i} \, \vartheta = (d_\tau-1)\vartheta.
$$
On the other hand,
\begin{align*}
\bigl( \wh X_{\tau}^{\rm cut} - \wh X_{\tau}^{\rm rot} \bigr) \times_0 (\I_{r_\tau}, 0) = \bigl(  \wh C_{\tau}^{\rm cut} - \wh C_{\tau}^{\rm rot} \bigr) \times_0 (\I_{r_\tau}, 0)  \bigtimes_{i=1}^m \wh \U_{\tau_i}^{\rm rot},
\end{align*}
where $\| \wh \U_{\tau_i}^{\rm rot} \|_2 \le 1$ by construction and \eqref{hat-CU-norm}. So we have
\begin{align*}
&\| \mat_0 \bigl( (\wh X_{\tau}^{\rm cut} - \wh X_{\tau}^{\rm rot} ) \times_0 (\I_{r_\tau}, 0) \bigr) \|_2 \le \| \mat_0 \bigl( \wh C_{\tau}^{\rm cut} - \wh C_{\tau}^{\rm rot} \bigr) \|_2 
\\
&\le \| \mat_0 \bigl( \wh C_{\tau}^{\rm cut}\bigr) - \mat_0 \bigl( \wh C_{\tau}^{\rm rot} \bigr) \|_F  = \| \mat_1 \bigl( \wh C_{\tau}^{\rm cut}\bigr) - \mat_1 \bigl( \wh C_{\tau}^{\rm rot} \bigr) \|_F\le \vartheta,
\end{align*}
where we used \eqref{mat-rot}--\eqref{mat-cut} in the last inequality. Altogether we find
\begin{align*}
\| \U_{\tau} - \wt \U_{\tau} \|_2 &\le \| \mat_0\bigl( X_\tau - \wt X_{\tau}^{\rm cut} \bigr) \|_2  + \| \mat_0 \bigl( (\wh X_{\tau}^{\rm cut} - \wh X_{\tau}^{\rm rot} ) \times_0 (\I_{r_\tau}, 0) \bigr) \|_2 
\\
&\le d_\tau \, \vartheta,
\end{align*}
which completes the proof of \eqref{U-err} by induction. Finally, for the full tree $\bar\tau=(\tau_1,\dots,\tau_m)$, where $\wh r_{\bar\tau}=r_{\bar\tau}=1$ but $\| C_{\bar\tau} \|$ is arbitrary, we use the same argument as above in estimating the norm of
$$
X_{\bar\tau} - \wh X_{\bar\tau} = X_{\bar\tau} - \wh X_{\bar\tau}^{\rm rot} =
\bigl( C_{\bar\tau} \bigtimes_{i=1}^m \U_{\tau_i} - C_{\bar\tau} \bigtimes_{i=1}^m \wt \U_{\tau_i} \bigr) +
\bigl(  \wh C_{\bar\tau}^{\rm cut} - \wh C_{\bar\tau}^{\rm rot} \bigr) \bigtimes_{i=1}^m \wh \U_{\tau_i}^{\rm rot}.
$$
This yields
$
\| X_{\bar\tau} - \wh X_{\bar\tau} \| \le \bigl( \| C_{\bar\tau} \| \, (d_{\bar\tau}-1)  + 1 \bigr)\vartheta. 
$
\end{proof}

\end{document}